\title[Double ramification cycles]{The Hodge-Double-Ramification conjecture and Mumford's formula on the universal Picard stack}
\DeclareSymbolFontAlphabet{\mathbbm}{bbold}
\DeclareSymbolFontAlphabet{\mathbb}{AMSb}
\numberwithin{equation}{subsection}
\newcommand{\mat}[1]{\begin{bmatrix}#1\end{bmatrix}}
\let\oref\ref
\renewcommand{\tilde}[1]{\widetilde{#1}}
\newcommand{\on}[1]{\operatorname{#1}}
\newcommand{\bb}[1]{{\mathbb{#1}}}
\newcommand{\ca}[1]{{\mathcal{#1}}}
\newcommand{\bd}[1]{{\mathbf{#1}}}
\newcommand{\ul}[1]{{\underline{#1}}}
\newcommand{\abs}[1]{\lvert#1\rvert}
\newcommand{\iso}{\stackrel{\sim}{\longrightarrow}}
\theoremstyle{definition}
\newtheorem{definition}{Definition}[section]
\theoremstyle{plain}
\newtheorem{proposition}[definition]{Proposition}
\newtheorem{lemma}[definition]{Lemma}
\newtheorem{theorem}[definition]{Theorem}
\newtheorem{corollary}[definition]{Corollary}
\newtheorem{introtheorem}{Theorem}[section]
\newtheorem{preintrotheorem}{Main Theorem}
\theoremstyle{remark}
\newtheorem{remark}[definition]{Remark}
\newtheorem{notation}[definition]{Notation}
\newcommand{\mdeg}{\ul{\mathrm{de}}\mathrm{g}}
\LetLtxMacro{\phiorig}{\phi}
\renewcommand{\phi}{\varphi}
\date{\today}
\author{Alessandro Chiodo}
\address{Sorbonne Université and Université Paris Cité, CNRS, IMJ-PRG, F-75005 Paris, France}
\email{alessandro.chiodo@imj-prg.fr}
\author{David Holmes}
\address{Mathematisch Instituut\\ 
Universiteit Leiden\\
Postbus 9512\\
2300 RA Leiden\\
Netherlands}
\email{holmesdst@math.leidenuniv.nl}
\tikzset{
  symbol/.style={
    draw=none,
    every to/.append style={
      edge node={node [sloped, allow upside down, auto=false]{$#1$}}}
  }
}
\newcommand{\Mbar}{\overline{\ca M}}
\newcommand{\DR}{\mathsf{DR}}
\newcommand{\cat}[1]{\bd{#1}}
\newcommand{\M}{\mathsf{M}}
\newcommand{\ghost}[1][M]{{\overline{\mathsf{#1}}}}
\renewcommand{\log}{\sf{log}}
\newcommand{\gp}{\mathsf{gp}}
\newcommand{\et}{\mathsf{\acute{e}t}}
\renewcommand{\sf}[1]{\mathsf{#1}}
\newcommand{\Chow}{\on{CH}}
\newcommand{\Spec}{\on{Spec}}
\newcommand{\PIC}{\mathfrak{Pic}}
\newcommand{\Sing}{\on{Sing}}
\newcommand{\SSing}{{\mathbb{S}\!\on{ing}}}
\newcommand{\Jac}{\mathsf{Jac}}
\newcommand{\LogPic}{\mathsf{LogPic}}
\newcommand{\logDR}{\sf{LogDR}}
\newcommand{\logDRL}{\sf{LogDRL}}
\newcommand{\op}{\sf{op}}
\newcommand{\vir}{\sf{vir}}
\newcommand{\pcc}{\Omega}
\newcommand{\rpcc}{\Xi}
\newcommand{\rgraphs}{\mathsf{G}}
\newcommand{\irr}{\textrm{irr}}
\newcommand{\polarbdl}{\ca H}
\newcommand{\polarcls}{\eta}
\begin{document}
\begin{abstract}The double ramification (DR) cycle associated to a line bundle on a family of curves detects where the line bundle becomes fibrewise-trivial. 
The Hodge-DR Conjecture proposes a formula for powers of the first Chern class of a natural line bundle on the DR cycle, with a number of applications in the computation of Euler characteristics of strata of differentials. In this paper we prove the conjecture, as well as an analogue for the logarithmic DR cycle. The proof of the former proceeds via reduction to a localisation computation of Fan, Wu and You; the proof of the latter is based on the Thom--Porteous formula, and as a special case gives a shorter proof of a recent result of Holmes, Molcho, Pandharipade, Pixton and Schmitt. Along the way we develop an analogue of Mumford's formula for the Chern character of the universal line bundle on the universal jacobian over the moduli space of twisted curves, generalising work of Mumford, Chiodo, and Pagani--Ricolfi--van Zelm. 
\end{abstract}
\maketitle
%
%
%
%
%
%
%
%
%


\tableofcontents

\section*{Introduction}

A fundamental problem in the study of algebraic curves is to determine when a given divisor can be realised as the zeros and poles of a rational function. An algorithm for a single curve was given by Abel and Jacobi in the 19th century. A more modern approach (with numerous applications in enumerative geometry and integrable systems, including \cite{BSSZ,Cav,clader2018powers,clader2018pixton,Schmitt2016Dimension-theor,BGR,Bae_taut,CSS,cavalieri2022pluricanonical,Ranganathan2022Logarithmic-Gro,BR21,BLS24,Blot2024Rooted-trees-wi,cavalieri2024k}) is to study this problem over the moduli space $\Mbar_{g,n}$ of stable $n$-marked curves of genus $g$. We begin this introduction by discussing what happens on the open locus $\ca M_{g, n}^\irr\subseteq \Mbar_{g,n} $ of irreducible curves. Given a vector of integers $\ul a = (a_1, \dots a_n) \in \bb Z^n$ summing to $0$, we form the divisor $\sum_i a_i [p_i]$ on the universal curve $\pi\colon C \to \ca M^\irr_{g,n}$, where the $p_i$ are the markings. The \emph{double ramification} locus
\begin{equation*}
\on{DRL}(\ul a) = \big\{(C;x_1,\dots,x_n)\in \ca M^\irr_{g,n}:
 \ca O({\textstyle\sum_i a_i[p_i]})\cong \ca O\big\}\stackrel{i}{\hookrightarrow} \ca M^\irr_{g,n}
\end{equation*}
measures exactly where the divisor $\sum_i a_i [p_i]$ is principal; it is so named because it can be seen as the locus of curves admitting a map to $\bb P^1$ with ramification over $0$ and $\infty$ prescribed by the positive and negative $a_i$. The double ramification locus is closed and of \emph{expected} codimension $g$; more precisely, we can construct a natural \emph{virtual fundamental class} in $\Chow^g(\ca M^\irr_{g,n})$. To define this class, we let $\Jac_{g,n}^\irr \to \ca M_{g, n}^\irr$ be the Jacobian of the universal curve $C$; this is the moduli space of degree-$0$ line bundles on $C$, and is a semiabelian scheme\footnote{Each fibre is an extension of an abelian variety by a torus. } over $\ca M_{g, n}^\irr$ of relative dimension $g$. We write $e\colon \ca M_{g, n}^\irr \to \Jac_{g, n}^\irr$ for the zero section of the Jacobian (represented by the trivial bundle on $C$), and $\sigma_\ul a\colon \ca M_{g, n}^\irr \to \Jac_{g, n}^\irr$ for the section representing the line bundle $\ca O(\sum_i a_i [p_i])$. Then 
\begin{equation}
\on{DRL}(\ul a) = \sigma_\ul a^{-1} e = e^{-1}\sigma_\ul a \stackrel{i}{\hookrightarrow} \ca M^\irr_{g,n}, 
\end{equation}
and we define the virtual fundamental class, the \emph{double ramification cycle}, as the Gysin pullback 
\begin{equation}
\DR(\ul a) = \sigma_\ul a^! [e] = e^![\sigma_\ul a] \in \Chow^g(\ca M^\irr_{g,n}). 
\end{equation}
Furthermore, the base change of the universal curve 
via the above map $i$ yields a 
family of curves $\pi_{\DR}\colon C_{\DR}\to \on{DRL}(\underline a)$
equipped with a fibrewise-trivial line bundle 
$\ca L_{\DR}=\ca O({\textstyle\sum_i a_i[p_i]})$; therefore,
via direct image we obtain the \emph{tautological}\footnote{The dual is because this was originally constructed in \cite{Chen2022A-tale-of-two-m} via a projective embedding, making the opposite sign choice more natural. } line bundle $\polarbdl=(\pi_{\DR})_*(\ca L_{\DR})^\vee$ with first Chern class $\polarcls$ on $\on{DRL}(\underline a)$, and for each positive integer $u$ a new 
 codimension-$(g+u)$ cycle
\begin{equation}\label{eq:groundlb}
\DR(\ul a)\cdot \polarcls^u. 
\end{equation}
Observe that we can replace $\ca O(\sum_i a_i [p_i])$ in the above constructions by \emph{any} line bundle $\ca L$ on $C$ which is fibrewise of degree 0, and the same discussion goes through; from now on we work in this increased generality. The case $\ca  L = \omega_{C/\ca M_{g, n}^\irr}^{\otimes k}(\sum_i a_i [p_i])$ for coefficients $a_i$ summing to $k(2g-2)$ makes important connections to spaces of differentials, see \cite{Farkas2016The-moduli-spac,Schmitt2016Dimension-theor,Bainbridge2019The-moduli-spac,Chen2022A-tale-of-two-m}, in particular the integral of powers of the first Chern class $\polarcls$ of the tautological line bundle $\polarbdl$ there is important for the computation of Euler characteristics of strata of differentials, see \cite[Theorem 1.3]{costantini2022chern}. 

On $\ca M_{g, n}^\irr$, and assuming $n$  positive, it is 
easy to express the class of the cycle $\DR(\ul a)\cdot \polarcls^u$ as 
a degeneracy class of a map of vector bundles. We can consider the exact sequence $0\to \ca L(-D)\to \ca L\to \ca L\mid_D\to0$ for a divisor $D$
%
supported on the markings and we choose $D$ sufficiently positive
so that
\begin{equation}\label{eq:irr_complex}
R^\bullet \pi_*\ca L=[\pi_*\ca L|_D \xrightarrow{\phi} R^1\pi_*(\ca L(-D))] 
\end{equation}
in the derived category.
Then the degeneracy locus of $\phi$ is exactly the 
locus where $\ca L$ admits a non-zero global section; {i.e.} 
the locus $\on{DRL}(\ca L)$. 
If we perform the same computation, not with the line bundle $\ca L$ on $C/\ca M_{g, n}^\irr$, but rather with the universal line bundle on $\Jac^\irr_{g, n}$, then this degeneracy locus is exactly the zero section of $\Jac^\irr_{g, n}$, which has the expected codimension and has fundamental class equal (by the Thom--Porteous formula) to $c_g(-R\pi_*\ca L)$ where $R\pi_*\ca L=\sum_{i} (-1)^i R^i\pi_*\ca L$. Because 
$\DR(\ca L)$ is defined as the pullback of $[e]$ from $\Jac^\irr_{g, n}$, and  
forming Chern classes commutes with pullback, we deduce 
$$\DR(\ca L) = c_g(-R\pi_* \ca L).$$ 
The intersections with powers of the first Chern class of the tautological line bundle $\polarbdl$ can be computed in a similar way, by realising that the degeneracy locus of \ref{eq:irr_complex} is really supported on the projectivisation $\bb P(\pi_*\ca L(D))$, and the line bundle $\polarbdl$ is simply $\ca O(1)$ on this projective bundle. By a slightly more careful application of the Thom--Porteous formula, we obtain the key relation 
\begin{equation}\label{eq:irr_formula}
\DR(\ca L)\cdot \polarcls^u = c_{g+u}(-R\pi_* \ca L). 
\end{equation}
Then, via Grothendieck Riemann--Roch, the latter can be expressed in terms of tautological classes on $\ca M_{g, n}^\irr$.

\subsection{Extending the DR cycle to stable curves}
For most of the applications mentioned above it is necessary to extend the constructions and computations from irreducible to stable curves. The definition of the double ramification cycle via the Jacobian carries over with some modifications. The Jacobian $\Jac_{g, n}$ (moduli space of line bundles of \emph{total} degree 0) of the universal curve $\pi\colon C \to \Mbar_{g,n}$ is no longer separated, so its unit section $e$ is no longer closed, and pulling it back along the section 
\begin{equation}
\sigma_\ca L \colon \Mbar_{g,n} \to \Jac_{g, n}
\end{equation}
classifying $\ca L$ does not yield a closed subspace of $\Mbar_{g,n}$. However, we can  rectify this problem by replacing the unit section $e$ by its Zariski closure $\bar e$ inside $\Jac_{g, n}$, and defining 
\begin{equation}
\on{DRL}(\ca L) = \sigma_\ca L^{-1}\bar e \hookrightarrow \Mbar_{g, n}
\end{equation}
and
\begin{equation}\label{eq:DR_def_intro}
\DR(\ca L) = \sigma_\ca L^![\bar e] \in \Chow^g(\Mbar_{g, n}). 
\end{equation}
The tautological line bundle $\polarbdl$ on $\on{DRL}(\ca L)$ can be extended similarly, and we still write $\polarcls$ for its first Chern class, see \ref{sec:computing_DR}. 
Note, however, that pulling back does \emph{not} commute 
with Zariski closure, and indeed the double ramification locus on $\Mbar_{g, n}$ is rarely equal to 
the closure of the double ramification locus on $\ca M_{g, n}^\irr$. 

The closure $\bar e$ is in general highly singular, and does not admit a natural functorial description. However, there is a natural resolution of singularities $\tilde e \to \bar e$ coming from log geometry (see \ref{sec:computing_DR}), and the map $\sigma_{\ca L}^{-1}\tilde e \to \Mbar_{g,n}$ naturally factors as a closed immersion to a log blowup\footnote{Roughly, an iterated blowup in boundary strata. } $\Mbar_{g,n}^\ca L \to \ca M_{g,n}$. We define the log double ramification cycle 
\begin{equation}\label{eq:logDR_def_intro}
\logDR(\ca L) = \sigma_\ca L^![\tilde e] \in \Chow^g(\Mbar_{g, n}^\ca L), 
\end{equation}
a cycle on $\Mbar_{g,n}^\ca L$ which pushes forward to $\DR(\ca L)$, and which has a number of natural properties which $\DR(\ca L)$ itself lacks, see \cite{Holmes2017Multiplicativit}. Again, the tautological line bundle $\polarbdl$ can be extended, and we again write $\polarcls$ for its first Chern class, see \ref{sec:computing_DR}. 

\subsection{Computing the DR and logDR cycles}

The class $c_{g}(-R\pi_* \ca L)$ still makes sense on $\Mbar_{g, n}$ and on $\Mbar_{g,n}^\ca L$, and still computes a degeneracy class representing the locus where $\ca L$ admits a non-zero global section, but it does \emph{not} equal $\DR(\ca L)$ or $\logDR(\ca L)$; admitting a non-zero global section does \emph{not} imply that the line bundle is trivial, since a non-zero global section can still vanish on several irreducible components. 

We know of two ways to correct this, one of which (taking roots of $\ca L$) yields a formula for $\DR(\ca L)$, the other (replacing $\ca L$ by a quasi-stable representative) yields a formula for $\logDR(\ca L)$; we discuss these in turn.

\subsubsection{Extracting \texorpdfstring{$r$}{r}th roots and setting \texorpdfstring{$r=0$}{r=0}} 
Over the past decade, Janda, Pandharipande, Pixton, and Zvonkine (JPPZ)
developed a technique 
\cite{ppz3spin, {Janda2016Double-ramifica}, Janda2018Double-ramifica}
identifying 
several instances of virtual classes 
within polynomial expressions in $H^*(\Mbar_{g,n})$ (and Chow rings)
via the moduli of $r$th roots
$$\epsilon\colon \Mbar_{g,n}^{\ca L,1/r} \to \Mbar_{g, n}.$$ 
In the present case, this can be made precise in relatively simple terms.
The above map $\epsilon$ is the finite cover 
classifying roots $\ca L^{1/r}$
of the given line bundle $\ca L$ on the universal stable curve,
with two small caveats: $(i)$ 
the curve where we extract the root 
is an enriched stack-theoretic version of the universal stable curve, with stabilisers of order $r$
at all nodes (an \emph{$r$-twisted} curve); $(ii)$ each point has an extra automorphism of order $r$ given by the multiplication by $r$th roots of unity 
on the fibres of $\ca L^{1/r}$.   

Then the main result of \cite{Janda2016Double-ramifica}, ``Theorem 1'', 
can be  simply phrased as
\begin{equation}\label{eq:JPPZintro}
\DR(\ca L)=[r\epsilon_* c_{g}(-R\pi_* \ca L^{1/r})]_{r=0}
\end{equation}
in the case $\ca L = \ca O(\sum_i a_i [p_i])$.
The expression 
between brackets is \emph{eventually} polynomial in $r$ by \cite[Prop.~3$''$]{Janda2016Double-ramifica}, and $[\ \ ]_{r=0}$ indicates that we take the constant term of this polynomial. 
The factor $r$ before the $\epsilon_*$ comes from $(ii)$; it could be avoided 
by rigidifying systematically the extra automorphism. 


The proof of the above statement was given via {virtual localisation} (as introduced in \cite{graber1999localization}),  relying on the fact that, for $\ca L = \ca O(\sum_i a_i [p_i])$, the cycle $\DR(\ca L)$ can be naturally identified with the push-forward of the virtual fundamental class of rubber maps \cite{Graber2005Relative-virtua,li2001stable,Janda2016Double-ramifica}. 
This lead to several generalisations 
\cite{Janda2016Double-ramifica,Janda2018Double-ramifica, 
Bae2020Pixtons-formula} (the latter relying on a comparison of log and rubber
virtual classes, see \cite{HMPW_comparison} for a foundational survey) and a conjecture involving the tautological line bundle $\polarbdl$ in
\cite[\S6. The Hodge DR Conjecture]{Chen2022A-tale-of-two-m}, 
proven there for $\ca L = \ca O(\sum_i a_i [p_i])$. Our first main result is a proof of the general case of this conjecture (see \ref{sec:hodge_DR} for details). 

\begin{preintrotheorem}\label{preintrotheorem:DR}
{Let $S$ be an algebraic stack, $\pi\colon C \to S$ a family of prestable curves of genus $g$, and $\ca L$ a line bundle on $C$ fibrewise of degree 0. Let $\epsilon\colon S^{1/r} \to S$ be the finite  map parametrising $r$th roots of $\ca L$, with $\ca L^{1/r}$ the universal $r$th root. Let $\polarbdl$ be the tautological line bundle, with first Chern class $\polarcls$, and fix a non-negative integer $u$. Then the expression
$\epsilon_*c_{g+u}(-R\pi_*\ca L^{1/r})$ is eventually a Laurent polynomial in $r$, and we have 
\begin{equation}
\DR(\ca L) \cdot \polarcls^u =[r^{u+1}\epsilon_*c_{g+u}(-R\pi_*\ca L^{1/r})]_{r=0},
\end{equation}
where $[\ \ ]_{r=0}$ stands for the constant term in $r$.}
\end{preintrotheorem}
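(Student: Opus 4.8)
\emph{Steps 1 and 2 (reduction to a top Chern class).} The plan is to turn each of the four steps sketched above into a precise statement and then assemble them. After the base changes of Step 1 it suffices to prove, over $\Jac_{g,n}$ and for the universal line bundle, the identity $[\bar e]\cdot c_1(\ca G)^u=[\,r^{u+1}\epsilon_* c_{g+u}(-R\pi_*\ca L^{1/r})\,]_{r=0}$: indeed $\DR(\ca L)=\sigma_{\ca L}^![\bar e]$, Gysin pull-back commutes both with formation of Chern classes and with push-forward along the finite map $\epsilon$, and the ground line bundle on $\on{DRL}(\ca L)$ is pulled back from its analogue over $\bar e$, so the general statement follows by applying $\sigma_{\ca L}^!$. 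Choose $D$ supported on the markings and sufficiently positive that $R^\bullet\pi_*\ca L=[\ca E\xrightarrow{\phi}\ca F]$ with $\ca E=\pi_*\ca L(D)$ and $\ca F=\pi_*(\ca L(D)|_D)$ vector bundles, as in \ref{eq:irr_complex}. Following Fulton, on $p\colon\bb P=\bb P(\ca E)\to\Jac_{g,n}$ the bundle $\ca V=p^*\ca F\otimes\ca O(1)$ carries a tautological section $s$ induced by $\phi$; the class $c_{\mathrm{top}}(\ca V)$ is supported on $(s=0)$, which maps onto the degeneracy locus of $\phi$; $\ca O(1)$ restricts on $(s=0)$ to the ground line bundle; and $c_{g+u}(-R\pi_*\ca L)=p_*\big(c_1(\ca O(1))^u\cdot c_{\mathrm{top}}(\ca V)\big)$. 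Running the same construction for $R^\bullet\pi_*\ca L^{1/r}=[\ca E_r\xrightarrow{\phi_r}\ca F_r]$ on the root stack produces $\bb P_r,\ca V_r,s_r$, and reduces the theorem to analysing the localized contributions of the components of $(s_r=0)$.

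\emph{Step 3 (the distinguished component).} Decompose $c_{\mathrm{top}}(\ca V)=\sum_Z(\text{localized contribution of }Z)$ over the branches of the normal cone to $(s=0)$ in $\bb P$ (cf.\ \ref{rem:ltcc_recap}), and likewise for $c_{\mathrm{top}}(\ca V_r)$. Since on an irreducible curve a fibrewise-degree-$0$ line bundle, resp.\ any of its $r$th roots, has a non-zero section if and only if it is trivial, exactly one component $Z_0$ has its generic point over $\Jac^{\irr}_{g,n}$: it is the unit section, of the expected codimension $g$, along which $s$ (resp.\ $s_r$) meets $0$ transversally, and its contribution to $c_{g+u}(-R\pi_*\ca L)$ is $[\bar e]\cdot c_1(\ca G)^u$, recovering \ref{eq:irr_formula}. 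On the root side $\epsilon$ identifies $Z_0$ with $\bar e|_{\Jac^{\irr}}$, but the $\pmb\mu_r$-automorphism on the fibres of $\ca L^{1/r}$ contributes a factor $r$ to $\epsilon_*$, while it forces $c_1$ of the ground line bundle of $\ca L^{1/r}$ along $Z_0$ to equal $\tfrac1r$ times the pull-back of $c_1(\ca G)$, contributing a further $r^{u}$; together this accounts for the factor $r^{u+1}$, so $Z_0$ contributes $[\bar e]\cdot c_1(\ca G)^u$ to $r^{u+1}\epsilon_* c_{g+u}(-R\pi_*\ca L^{1/r})$ with $r$-order $0$. All remaining components $Z$ --- the \emph{boundary} components --- lie over $\Jac_{g,n}\setminus\Jac^{\irr}_{g,n}$.

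\emph{Step 4 (vanishing of the boundary at $r=0$; the crux).} For a boundary component $Z$ of $(s_r=0)$ one bounds below the $r$-order of the combined contribution of all components sharing the image $\epsilon(Z)\subset\Jac_{g,n}$. A Riemann--Roch computation of $\operatorname{rk}\ca E_r-\operatorname{rk}\ca F_r$ on the $r$-twisted curve shows that the excess dimension $e$ of $Z$ over $\dim Z_0$ (the index ``$d-c$'' of \ref{lem:codim_lower_bound}, treated in general in \ref{lem:finalA}) lowers this $r$-order by $2e$; conversely a non-zero global section of $\ca L$ on $C_Z$ vanishing on the parasitic subcurve $C_0\subseteq C_Z$ is unchanged by twisting with the $r$-torsion line bundles supported on $C_0$, so $\epsilon$ over $\epsilon(Z)$ has at least $r^{2g(C_0)}$ sheets and the $r$-order is raised by $2g(C_0)$. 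Hence the contribution of $\epsilon(Z)$ to $r^{u+1}\epsilon_* c_{g+u}(-R\pi_*\ca L^{1/r})$ has $r$-order at least $2\big(g(C_0)-e\big)$, which by the key inequality \ref{eq:g0e} is $\ge 2>0$. Therefore every boundary contribution vanishes at $r=0$, leaving $[\bar e]\cdot c_1(\ca G)^u$; by the reduction of Step 1 this proves \ref{preintrotheorem:DR}. The very same component-by-component bookkeeping shows that $\epsilon_* c_{g+u}(-R\pi_*\ca L^{1/r})$ is eventually polynomial in $r$ --- each contribution is, for $r$ in a fixed residue class and large, a fixed tautological class times a polynomial in $r$ coming from the Chern-class formulas of the root construction --- and identifies its constant term.

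\emph{Main obstacle.} The heart of the argument is the sharp inequality \ref{eq:g0e}: bounding the excess dimension of an \emph{arbitrary} boundary component by the genus of its parasitic subcurve. This requires a careful analysis of the dual graph of $C_Z$ and of how sections of $\ca L$ supported away from $C_0$ interact with the $r$-twisted structure (the general case \ref{lem:finalA}); the vine curve of \ref{eg:vine} already exhibits why the Betti number of the graph forces $e<g(C_0)$. Subsidiary technical points are the exact bookkeeping that produces the power $r^{u+1}$, and carrying out the localized-Chern-class decomposition --- and controlling its behaviour under the finite map $\epsilon$ --- at the level of normal cones rather than of cycles.
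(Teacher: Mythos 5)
Your outline reproduces the paper's own four-step strategy, and most of it matches the actual proof: the reduction to (a quasi-compact open of) the universal Jacobian with the universal bundle is \ref{lem:main_reduction}, the passage to $\bb P(\pi_*\ca L(rD))$ and the localised top Chern class of $\ca V=\phi^*\ca F_r\otimes\ca O_{\ca E_r}(1)$ is exactly \ref{thm:projective_divisibility} and \ref{rem:ltcc_recap}, the identification of the unique distinguished cone over the smooth locus and the bookkeeping $r\cdot r^u$ for the prefactor $r^{u+1}$ is \ref{lem:cont_main_comp} together with the relation $\epsilon_{\bb P}^*c_1(\ca O_{\ca E}(1))=r\,c_1(\ca O_{\ca E_r}(1))$, and the final combinatorial inequality is \ref{lem:codim_lower_bound} and \ref{lem:finalA}.

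There is, however, a genuine gap at the crux of your Step 4. The assertion that the excess dimension $e=d-c$ lowers the $r$-order of a boundary donation by $2e$ is not produced by ``a Riemann--Roch computation of $\operatorname{rank}\ca E_r-\operatorname{rank}\ca F_r$'': a rank count only yields the expected codimension, not any $r$-divisibility. In the paper the donation of a distinguished cone is first expanded via Segre classes as $\alpha_{\mathfrak C}=\sum_{i+j\le d-c}\phi^*c_i(-R\pi_*\ca L^{1/r})\,c_j(\ca Q)\cap s_{k-d+i+j}(\mathfrak C)$ (\ref{lem:chern_segre_formula}), and one then needs the bound $\ord_r$ of each stratum contribution to $c_i(-R\pi_*\ca L^{1/r})$ (and to $c_j(\ca Q)$, via $\ca E_r=-R\pi_*(\omega\otimes\ca L(rD))^{-1/r}$) to be at least $-2i+\#E$; this divisibility is extracted from the explicit Bernoulli-polynomial structure of the Grothendieck--Riemann--Roch formula on the root stack, i.e.\ from \ref{introtheorem:ch_formula} packaged as \ref{lem:key_r_power_count}, together with a correction $-\#E_3$ for pushing down from the root stack along the boundary divisors. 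So Theorem~\ref{introtheorem:ch_formula} is an essential \emph{input} to the proof of \ref{preintrotheorem:DR}, not merely a device for evaluating the answer afterwards, and without it your Step 4 does not close. A smaller point: since $\ca E=\pi_*\ca L(rD)$, the ambient ring $\Chow(\bb P_S(\ca E))$ itself varies with $r$, so ``polynomiality in $r$'' cannot be formulated at the level of the projective bundle; the paper instead proves a congruence modulo $r$ for all large $r$ coprime to a fixed integer and converts this to the constant-term statement via \ref{lem:poly_divis_translate}.
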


Our proof makes use of a number of invariance properties for the DR cycle and for Chern characters, which may be of some independent interest; see \ref{sec:invariances}. 

\subsection{Quasi-stable twists of \texorpdfstring{$\ca L$}{L}}

As explained above, the formula $c_{g}(-R\pi_* \ca L)$ fails to correctly detect where $\ca L$ is trivial since there may be non-zero global sections that fail to trivialise $\ca L$; this can only happen when $\ca L$ has non-zero degree on some irreducible components of the curve. However, after pulling back to a suitable blowup of $\tilde C \coloneqq C \times_{\Mbar_{g,n}} \ca M^{\ca L}_{g,n}$, we can twist $\ca L$ by a vertical boundary divisor to obtain a new line bundle with the following properties: 
\begin{enumerate}
\item The cycle $\logDR(\ca L)$ is exactly the virtual fundamental class of the locus where $\tilde{\ca L}$ is fibrewise trivial;
\item The degrees of $\tilde{\ca L}$ on irreducible components are close enough to 0 that $\tilde{\ca L}$ is trivial if and only if it admits a non-zero global section. In particular, $c_{g}(-R\pi_* \tilde{\ca L})$ computes the virtual fundamental class of the locus where $\tilde{\ca L}$ is fibrewise trivial. 
\end{enumerate}

Putting these together yields our second main theorem. It turns out to be harmless to also take roots of $\ca L$, which together with GRR will end up yielding a more satisfying formula in the case $u=0$, giving a new proof and generalisation of the main result of \cite{Holmes2022Logarithmic-double}.

\begin{preintrotheorem}\label{preintrotheorem:logDR}
Let $S$ be an algebraic stack, $\pi\colon C \to S$ a family of prestable curves of genus $g$, and $\ca L$ a line bundle on $C$ fibrewise of degree 0. Let $\epsilon\colon S^{1/r} \to S$ be the finite  map parametrising $r$th roots of $\ca L$, with $\ca L^{1/r}$ the universal $r$th root. Let $\polarbdl$ be the tautological line bundle, with first Chern class $\polarcls$, and fix a non-negative integer $u$. Then for \emph{every} integer $r \ge 1$ we have 
\begin{equation}
\logDR(\ca L) \cdot \polarcls^u =r^{u+1}\epsilon_*c_{g+u}(-R\pi_*\ca L^{1/r}). 
\end{equation}
\end{preintrotheorem}

\subsection{The formulae of Mumford and Pixton}

We have described two ways to express double ramification cycles in terms of Chern classes; in this section we give methods to compute these Chern classes. 

In 1983, Mumford \cite{Mumford1983Towards-an-enum} gave an expression for the Chern character of the derived pushforward of $\omega$ to $\Mbar_{g,n}$ in terms of standard tautological classes: 
\begin{equation}
\on{ch}_m(\pi_!\omega) = \frac{B_{m+1}(1)}{(m+1)!}\left(\kappa_m - \sum_{i=1}^n  \psi_i^m  + \frac{1}{2} j_*\left(\frac{\psi^m - (- \psi')^m}{\psi + \psi'}\right)\right). 
\end{equation}
To explain this formula, we introduce a little notation: $p_1, \dots, p_n$ are the given sections of $\pi$, and $\psi_i = c_1(p_i^*\omega)$, $\kappa_m = \pi_*(c_1(\omega(\sum_i p_i))^{m+1})$. We write $\Sing$ for the locus where $\pi$ is not smooth, with double cover $\SSing \to \Sing$ separating the two branches; there are natural finite maps $i\colon \SSing \to C$ and $j = \pi\circ i\colon \SSing \to S$. The space $\SSing$ carries two natural line bundles, the first given by the cotangent line to the given branch, and the second being the cotangent line to the other branch; we write $\psi$ and $\psi'$ for the respective first Chern classes. Finally, $B_{m+1}$ is the degree-$(m+1)$ Bernoulli polynomial. 

In 2008, the first-named author \cite{Chiodo2008Towards-an-enum}  a  generalised that formula in two directions: first, by allowing $\omega$ to be raised to some power and to be twisted by some multiples of the markings, and second, by considering also roots of the resulting line bundle. To state the result, we fix integers $s$ and $a_1, \dots, a_n$, and define a line bundle 
\begin{equation}\label{eq:twisted_omega}
\ca L = \omega_\log^{\otimes s}\left(-\textstyle \sum_i a_i p_i\right)
\end{equation}
on the universal curve $C$ over $\Mbar_{g,n}$ (here $\omega_\log = \omega(\sum_i p_i))$. Given a positive integer $r$ dividing the relative degree $s(2g - 2 + n) - \sum_i a_i$ of $\ca L$, a compactification $\Mbar{}_{g,n}^{\ca L/r}$ of the space of $r$th roots of $\ca L$ was constructed in \cite{Jarvis2000Compactificatio}, \cite{abramovich2003moduli}, \cite{caporaso2007moduli}, and \cite{Chiodo2008Stable-twisted-}; it carries a universal twisted curve $\pi^r\colon \ca C\to \Mbar{}_{g,n}^{\ca L/r}$, and $\ca C$ carries a universal $r$th root $\ca L^{1/r}$ of $\ca L$. 
Then 
\begin{multline}
\on{ch}_m(\pi^r_!\ca L^{1/r}) = \frac{B_{m+1}(s/r)}{(m+1)!}\kappa_m - \sum_{i=1}^n \frac{B_{m+1}(m_i/r)}{(m+1)!}\psi_i^m  \\+ \frac{1}{2} \sum_{a=0}^{r-1} \frac{r B_{m+1}(a/r)}{(m+1)!}(j_a)_*\left( \frac{\psi^{m} - (-\psi')^m}{\psi + \psi'} \right). 
\end{multline}
Here the classes $\kappa_m$ and $\psi_i$ are constructed by direct analogy with the classical case. The singular locus $\Sing$ of $\pi^r$ consists of nodal singularities with stabilizers of order $r$; the double cover $\SSing \to \Sing$ separating the branches
   naturally decomposes into 
 $r$ substacks where the $\pmb \mu_r$-action at the nodes, which lifts to 
 $\ca L^{1/r}$, yields the character $a\in \bb Z/r\bb Z$:
 $$\bigsqcup_{a=0}^{r-1} i_a\colon \bigsqcup_{a=0}^{r-1}\SSing_a=\SSing\longrightarrow  \Sing\hookrightarrow \ca C.$$
We write $j_a=\pi^r\circ i_a$, and $\widetilde\psi$ and $\widetilde\psi'$ for the 
 Chern classes of the line bundles cotangent to the given branch and to the other branch, respectively; these equal $\frac1r$ times the Chern classes $\psi$ and $\psi'$ of the line bundles cotangent to the coarse branches. 

\subsubsection{Mumford's formula}

We generalise this in two further directions. First, rather than working over the moduli stack of stable marked curves, we allow an arbitrary family of prestable curves, possibly with markings. In particular, our results apply to the universal curve over the stack of stable maps to any target variety. 

Second, rather than considering only line bundles built from powers of $\omega$ and sections, we allow arbitrary line bundles. 
Even over $\Mbar_{g,n}$ this allows significant new room for manoeuvre, as the line bundle $\ca L$ of \ref{eq:twisted_omega} can be twisted by vertical divisors living over the boundary (for $r=1$, this case was treated by Pagani, Ricolfi and van Zelm \cite{Pagani2020Pullbacks-of-un}). 

In order to give a uniform statement, we work in a rather universal case. Both sides of our formula commute with arbitrary pullback, so all other cases can be obtained from this. We write $\frak M_{g}$ for the stack of prestable curves of genus $g$, and for an integer $d$ we write $\PIC_g^d$ for the stack whose objects are pairs $(C/S, \ca L)$ where $C/S$ is a prestable curve of genus $g$, and $\ca L$ is a line bundle on $C$ of relative degree $d$. In place of $\Mbar{}_{g,n}^{\ca L/r}$ we work with the stack $\PIC_g^{1/r,d}$ of twisted $r$th roots of the universal bundle over $\PIC_g^d$, carrying a universal twisted curve $\pi\colon \ca C \to \PIC_g^{1/r, d}$ and universal $r$th root $\ca L^{1/r}$. 
\begin{preintrotheorem}\label{thm:intro_ch_formula} 
 We have 
  \begin{multline}\label{ch_formula_correction} 
  \on{ch}_{m}(\pi_!\ca L^{1/r})=
\pi_* \frac{\mathcal B_{m+1}\left(\frac{c_1\ca L}{r}, c_1\omega\right)}{(m+1)!} + \\
\frac{r}{2} \sum_{\substack{{p+q=m+1}\\{p\ge 2}\\0\le a<r}}
 \frac{B_{p}(\frac{a}{r})}{p!q!r^q}(j_{a})_{*}\left((i_a)^*(c_1\ca L)^{q} \frac{\psi^{p-1} - (-\psi')^{p-1}}{\psi + \psi}\right). 
\end{multline}
\end{preintrotheorem}
Here $\mathcal B_m(x,y) = y^mB_m(x/y)\in \bb Q[x,y]$ is the homogenised version of the usual Bernoulli polynomial. Our formula is written in the Chow Cohomology of the Artin stack $\PIC_g^{1/r, d}$. This stack is neither Deligne--Mumford, nor quasi-compact, nor separated, yet its Chow cohomology (with test objects being finite-type separated schemes) is a reasonably well-behaved ring, as explained in \cite[\S 2]{Bae2020Pixtons-formula}. 
A detailed explanation of the terms in this formula can be found in \ref{sec:intro_RR}. 


\subsubsection{Pixton's formula}

The usual translation between the Chern character and the total Chern class \ref{eq:chern_char_class} allows us to convert \ref{ch_formula_strata} into an expression for $r^{u+1}\epsilon_*c_{g+u}(-R\pi_*\ca L^{1/r})$, but the result is not particularly attractive. However, in the case $u=0$, it is possible to write the constant term in $r$ in a much cleaner way (we do not know whether such a re-writing is possible for higher $u$). For each positive integer $r$ we define a class
\begin{equation}\label{eq:Pixton_strata_version}
\begin{split}
& P(r) \coloneqq \left(\exp{  -\frac{1}{2}\pi_* (c_1\ca L)^2} \right)\cdot \\
& 
 \sum_{\tilde \Gamma \in \rgraphs}  \frac{r^{- h^1(\Gamma)}}{\abs{\on{Aut}(\tilde \Gamma)}} \sum_{w\in W_r(\tilde \Gamma)} {j_{\tilde\Gamma}}_* \left[ \prod_{\{h, h'\}\in E(\tilde \Gamma)} \frac{1-\exp{( -\frac{1}{2}(\frac{w(e)}{r})^2 \psi_h + \psi_{h'}})}{\psi_h + \psi_{h'}} \right]
 \end{split}
\end{equation}
in the Chow ring of $\PIC$, where $\rgraphs$ is the set of pairs $(\Gamma, \delta)$ of a graph and a `multidegree' function, and  $W_r(\tilde \Gamma)$ is the set of weightings modulo $r$ on $\Gamma$ balancing $\delta$. We define $P_d(r)$ to be the codimension-$d$ part of this expression. 

\begin{proposition}\label{preintrotheorem:Pixton} We restrict to the total-degree-zero part of the Picard stack. 
Fix a positive integer $d$. Then the constant terms of the polynomials in $r$ given by 
    \begin{equation}
r^{2d+1-2g}    \epsilon_* c_d(-R\pi_*\ca L^{1/r})  \text{ and } P_d(r)
    \end{equation}
are equal in the Chow ring of $\PIC^{\mathrm{tot}0}$. 
\end{proposition}

\subsection{Applications}

\subsubsection{Proof of the Hodge-DR Conjecture}\label{sec:hodge_DR}

The Hodge-DR conjecture \cite[Conjecture 1.4]{Chen2022A-tale-of-two-m} predicts that, given non-negative integers $g, k$, and $u$, and a vector of integers $(a_1, \dots, a_n) \in \bb Z^n$ with sum $k(2g-2 + n)$, the class
\begin{equation}
\DR\left(\omega_{\log}^{\otimes k}\left(-\textstyle\sum_i a_i x_i\right)\right) \eta^u \in \Chow^{g+u}(\Mbar_{g,n})
\end{equation}
is equal to the coefficient of $r^u$ in the eventually-polynomial expression $r^{2u+1}\epsilon_*c_{g+u}(-R\pi_*\ca (\omega_{\log}^{\otimes k}(-\sum_i a_i x_i))^{1/r})$. This conjecture was proven for $u=0$ in \cite{Bae2020Pixtons-formula}, and for $k=0$ in \cite{Chen2022A-tale-of-two-m}. The general case now follows as a special case of \ref{preintrotheorem:DR}. 

We also remark that, combining the case $g=d$ of \ref{preintrotheorem:Pixton} with the case $u=0$ of \ref{preintrotheorem:DR}, we recover the main theorem of \cite{Bae2020Pixtons-formula}, and after suitable pullback also the main results of \cite{Janda2016Double-ramifica} and \cite{Janda2018Double-ramifica}. However, this does not yield new proofs, in the sense that we again reduce to a localisation computation. 

\subsubsection{Statement and proof of the log-Hodge-DR Conjecture}
In the same spirit, \ref{preintrotheorem:logDR} can be seen as a version of the Hodge-DR conjecture (and a proof) for the log DR cycle.

\subsubsection{Generalising the logDR formula}

Combining \ref{thm:intro_ch_formula} with the $u=0$ case of \ref{preintrotheorem:logDR} gives a new proof of the main result of \cite{Holmes2022Logarithmic-double}, after applying the method of that paper to re-write \ref{eq:Pixton_strata_version} in terms of piecewise polynomial functions on the moduli space of tropical curves. This is a genuinely different proof, in that it runs via the Thom--Porteous formula rather than via localisation. In particular, it applies to more general stability conditions, and is valid over fields of arbitrary characteristic (as long as $r$ is chosen coprime to the characteristic). 

\subsubsection{Extending results of Pagani, Ricolfi and van Zelm}

For a line bundle $\ca L$ on the universal stable curve over $\Mbar_{g,n}$, \cite{Pagani2020Pullbacks-of-un} give a very explicit formula for the Chern character of $R\pi_*\ca L$ in the Chow ring of $\Mbar_{g,n}$, together with applications to the computation of Brill-Noether cycles. Our \ref{thm:intro_ch_formula} can be seen as a generalisation of their results where we allow for more general bases and also for taking roots of $\ca L$, but it is also less explicit. In \ref{sec:formula_on_Mbar} we give a more explicit version of our formula in the case of roots of line bundles on the universal curve over $\Mbar_{g,n}$, generalising their results to roots.

\subsection*{Acknowledgements}
A.C. was supported by the
ANR-18-CE40-0009. D.H. is supported by grant VI.Vidi.193.006 of the Dutch Research Council (NWO). Additional support for this collaboration was provided by NUFFIC grant 2009008400: Spin double ramification cycles. 



We also would like to thank Felix Janda, Sam Molcho, Aaron Pixton, Kamyar Amini, Jonathan Wise, and Johannes Schmitt for helpful comments and discussion.

\section{Definitions and detailed statements of results}

In this section we present background material in logarithmic geometry and in tautological classes on the universal Picard stack, in order to give precise statements of our main results Theorems \oref{introtheorem:DR}, \oref{introtheorem:logDR}, and \oref{introtheorem:ch_formula}. 

\subsection{Conventions}

We work over a field of arbitrary characteristic, except in \ref{sec:proof_of_hodge_DR} where we restrict to characteristic 0 in order to apply the localisation results of \cite{Fan2019Higher-genus-re}. Whenever we consider spaces of $r$th roots of a line bundle, we always assume that $r$ is chosen coprime to the characteristic. We work with Chow rings with rational coefficients. 

We write $\mathfrak M$ for the stack of prestable stable curves. We write $\PIC$ for the stack of pairs of a prestable curve and a line bundle (the \emph{universal Picard stack}), and $\PIC^{\mathrm{tot}0}$ for the open substack where the line bundle has fibrewise total degree 0 (i.e. the sum of the degrees on the irreducible components of the fibre is 0). When we work with Chow rings of (possibly non-finite type) Artin stacks, we always mean the Chow cohomology with test objects separated finite-type schemes. This is a slight restriction of the theory in \cite{Bae2020Pixtons-formula} where separatedness is not required, which we make in order to apply Riemann-Roch. It seems plausible to us that the natural map between these two theories is an isomorphism (and indeed this is so if the target is a separated scheme, as explained to us by Bae), but we do not have a proof in the general case. 


If $\pi\colon C \to S$ is a family of prestable curves, there are two reasonable notions of generalised Jacobian. We write $\Jac_{C/S}$ for the relative coarse moduli space of line bundles which are fibrewise of total degree 0, and $\Jac^{\ul 0}_{C/S}$ for the open substack of those bundles which have multidegree $\ul 0$, in other words degree 0 on every irreducible component of every fibre. These Jacobians coincide exactly where the fibres of $C \to S$ are irreducible. The space $\Jac^{\ul 0}_{C/S}$ is a semiabelian algebraic space over $S$, in particular it is separated; in contrast, $\Jac_{C/S}$ often fails to be separated. 

\subsection{Roots of line bundles on curves}

For a positive integer $r$, we write $\frak M[r]\to \frak M$ for the root stack along all boundary divisors (the inertia at a point is of order equal to $r$ raised to the power the number of nodes). This carries a universal $r$-twisted curve $\ca C$. If $\pi\colon C \to S$ is a prestable curve inducing a map $S \to \frak M$, we write $S[r] = S \times_\frak M \frak M[r]$ , and denote the corresponding twisted curve again by $\ca C$. If $\ca L$ is a line bundle on $C$ whose fibrewise-degree is divisible by $r$, we write $\ca S^{1/r}$ for the stack $r$th roots of $\ca L$ on $\ca C$, with natural map $S^{1/r} \to S[r] \to S$ denoted by $\epsilon_r$. The twisted curve $\ca C\times_{S[r]} S^{1/r}$ carries a universal $r$th root $\ca L^{1/r}$. 


\subsection{Logarithmic background}
\label{sec:log_background}

To fix notation, a log scheme is a triple $(X, \M_X, \alpha)$ where $X$ is a scheme, $\M_X$ is a sheaf of monoids on $X_\et$, and $\alpha\colon \M_X \to \ca O_X$ is a map of sheaves of monoids (where $\ca O_X$ has its multiplicative monoid structure) such that $\alpha^{-1}\ca O_X^\times \to \ca O_X^\times$ is an isomorphism. We write $\ghost_X = \M_X /\M_X^\times$ for the \emph{ghost sheaf} --- the quotient of $\M_X$ by the subsheaf of units. We work throughout with fine saturated log structures. Details of log geometry can be found in \cite{Ogus2018Lectures-on-log}, and log curves are introduced in \cite{Kato2000Log-smooth-defo}. We learnt the approach to piecewise linear functions described below from \cite{Marcus2017Logarithmic-com}; it is explained in further detail in \cite{Chen2022A-tale-of-two-m}. 

\subsubsection{Log curves}
A log curve is a proper log smooth integral vertical\footnote{This means that the log structure does not see the markings, more precisely that it is strict on the smooth locus of $\pi$. } morphism $\pi\colon C \to S$ whose underlying map on schemes is a prestable (unmarked) curve. If $C \to S$ is the universal curve over the stack of (pre)stable curves, then equipping both $C$ and $S$ with their divisorial log structures coming from their normal-crossings boundary divisors gives an example of a log curve structure. This is called the \emph{minimal} log structure, and every other log curve structure is obtained by pulling back this one.  



Let $\pi\colon C \to S$ be a log curve, and let $s$ be a geometric point of $S$. The graph $\Gamma_s$ of $C_s$ acquires a \emph{metric} sending each edge to a non-zero element of $\ghost_{S,s}$, which we call the \emph{length}. This can be defined by setting the length of an edge $e$ corresponding to a singular point $c \in C_s$ to the unique $\ell_e \in \ghost_{S, s}$ such that 
\begin{equation}
\ghost_{C,s} \cong \{(a,b) \in \ghost_{S,s}^2 : \ell_e \mid (a-b)\}
\end{equation}
where $\ell_e \mid (a-b)$ means that there exists an integer $s$ such that $a-b = s\ell_e$. The image $\alpha(\ell_e)$ of $\ell_e$ in $\ca O_{S,s}$ gives a local equation for the node, in the sense that the node is locally cut out by $xy = \alpha(\ell_e)$; if $C$ is smooth over a schematically dense open of $S$ then this equation determines $\ell_e$, but not in general. In this way, the length $\ell_e$ can be seen as a generalisation of the notion of the \emph{thickness} of a singularity in the sense of rigid analytic geometry. 

\subsubsection{Piecewise linear functions}\label{sec:PL_functions}
A \emph{piecewise linear (PL)} function on $C$ is a section of the groupified ghost sheaf $\ghost_C^\gp$. We refer the reader to \cite[Lemma 2.12]{Chen2022A-tale-of-two-m} for a relation to the perhaps more familiar notion of a PL function on a tropical curve. 

The short exact sequence 
\begin{equation}
1 \to \ca O_C^\times \to \M_C^\gp \to \ghost_C^\gp \to 0
\end{equation}
induces a map from PL functions on $C$ to line bundles on $C$, which we denote by $\beta \mapsto \ca O_C(\beta)$.

We can see $\ca O_C(\beta)$ as a generalisation of the notion of the line bundle defined by a vertical Cartier divisor on $C$ (a Cartier divisor which is contracted by the map $\pi\colon C \to S$). To see this, suppose for a moment that $C \to S$ is smooth over a dense open subscheme of $S$, and that the log structure is minimal. Then the line bundle $\ca O_C(\beta)$ has a canonical rational section, inducing a vertical Cartier divisor on $C$. In general such a canonical rational section does not exist, yet the construction of $\ca O_C(\beta)$ always makes sense.

By the definition of $C/S$ being a log curve, at each smooth point $c$ of $C$ there is a canonical isomorphism 
\begin{equation}
\ghost_{C, c} \iso \ghost_{S, \pi(c)}, 
\end{equation}
so that we can think of a PL function $\beta$ as assigning to each irreducible component of each fibre $C_s$ (equivalently, each vertex of $\Gamma_s$) an element of $\ghost_{S,s}$; the \emph{value} of $\beta$ at that irreducible component. These values then satisfy the condition that $\ell_e \mid (\beta(u) - \beta(v))$ if $e$ is an edge from $u$ to $v$; we call 
\begin{equation}
\frac{\beta(v) - \beta(u)}{\ell_e} \in \bb Z
\end{equation}
the \emph{slope} of $\beta$ from $u$ to $v$. 

If $M$ is an integral monoid (i.e. $M$ injects into $M^\gp$), we have a partial order on $M^\gp$ by setting $m \ge n$ if $m - n \in M \subseteq M^\gp$. 

Given a geometric point $s$ of $S$ and a PL function $\beta$ on $C_s$,  we say $\beta$ is \emph{totally ordered} if for every two irreducible components $u$ and $v$ of $C_s$, either $\beta(u) \le \beta(v)$ or $\beta(v) \le \beta(u)$. We say $\beta$ has \emph{minimum value 0} if there exists a component $v$ with $\beta(v) = 0$, and if for every component $u$ we have $\beta(u)\ge 0$. 

Suppose that $\beta$ is totally ordered and takes minimum value $0$. We say $\beta$ is \emph{reduced} if whenever $e$ is an edge from $v_1$ to $v_2$ with slope $s$ and $u$ is a vertex with $\beta(v_1) < \beta(u) < \beta(v_2)$, the difference $\beta(v_2) - \beta(u)$ is divisible by the slope $s$. This ensures that a certain fibre product of log curves is reduced, see \cite[\S 6]{Bae2020Pixtons-formula} for details. 


\subsection{Computing DR}\label{sec:computing_DR}
Let $\pi\colon C \to S$ be a log curve and $\ca L$ a line bundle on $C$ fibrewise of total degree $0$. 
The logarithmic DR locus $\logDRL(\ca L)$ represents the functor 
\begin{align}\label{eq:logDRL}
\cat{LogSch}_S^\op &\to \cat{Set}; \nonumber \\ T &\mapsto \{ (\beta, \ca F, \phi)\}
\end{align}
where $\beta$ is a reduced piecewise linear function on $C_T$, $\ca F$ is a line bundle on $T$, and $\phi\colon \ca L \iso \ca O_C(\beta) \otimes \pi^*\ca F$ is an isomorphism. For a given $T$, such a triple $(\beta, \ca F, \phi)$ is unique up to unique isomorphism if it exists, so $p\colon \logDRL(\ca L )\to S$ is a monomorphism in the logarithmic category. We denote by $\polarbdl$ the \emph{tautological} line bundle $\ca F^\vee$ on $\logDRL(\ca L)$, with first Chern class $\polarcls$.

An alternative construction of $\polarbdl$ is available if we can find a sufficient ample effective horizontal divisor $D$ on $C$; then the maps
\begin{equation}
\pi^*\ca F \iso \ca L(-\beta) \hookrightarrow \ca L \to \ca L|_D
\end{equation}
induce by adjunction a map $\ca F \to \pi_*\ca L|_D$, and hence a map 
\begin{equation}\label{rem:other_embedding}
\logDRL(\ca L) \to \bb P(\pi_*\ca L|_D)
\end{equation}
with the property that $\ca F^\vee$ is the pullback of $\ca O(1)$ along this map (see \cite[Lemma 6.1]{Chen2022A-tale-of-two-m}, \cite[\href{https://stacks.math.columbia.edu/tag/0FCY}{Example 0FCY}]{stacks-project}; the dual comes from the fact that we define the projectivisation via subbundles rather than quotient bundles).



In order to equip $\logDRL(\ca L)$ with a virtual fundamental class, we first embed it in an ambient space $\tilde S^\ca L$, defined as the space representing the functor 
\begin{align}\label{eq:StildeL}
\cat{LogSch}_S^\op &\to \cat{Set}; \nonumber \\ T &\mapsto \{ \beta \in \sf{PL}(C_T) : \mdeg \ca L  = \mdeg \ca O_C(\beta)\}; 
\end{align}
here $\beta$ is exactly as in the definition of $\logDRL(\ca L)$, the only difference being that now we simply require the line bundle $\ca L(-\beta)$ to have multidegree $\ul 0$ on each fibre (rather than imposing that it be a pullback from the base). There is a natural closed immersion 
\begin{equation}
\logDRL(\ca L) \hookrightarrow \tilde S^\ca L, 
\end{equation}
and the map $\tilde S^\ca L \to S$ is a log monomorphism and is an isomorphism over the locus of irreducible curves. 
If $S = \Mbar_{g,n}$ then $\tilde S^\ca L \to S$ is an open subscheme of a modification of $S$ along the boundary (i.e. a proper birational morphism which is an isomorphism away from the boundary); in general it is a base-change of such a map. There is a natural map 
\begin{equation}
\tilde S^\ca L \to \Jac^{\ul 0}_{C/S}; \beta \mapsto \ca L(- \beta), 
\end{equation}
and the pullback of the unit section is exactly $\logDRL(\ca L)$. These constructions all commute with base-change, and so this situation is pulled back from the universal situation where $S = \PIC^{\mathrm{tot}0}$ and $\ca L$ is the universal line bundle, in which case $\tilde S^\ca L$ is smooth (see \cite[Theorem 2.4]{Chen2022A-tale-of-two-m}), hence the map $\tilde S^\ca L \to \Jac^{\ul 0}_{C/\PIC^{tot0}}$ is l.c.i. We define the virtual fundamental class $\logDR(\ca L)$ of $\logDRL(\ca L)$ to be the Gysin pullback of the unit section along this map. 

\begin{definition}\label{def:DR}
Given $u \in \bb Z_{\ge 0}$, we define 
\begin{equation}
\DR(\ca L)[u] = p_*(\logDR(\ca L) \cdot \polarcls^u) \in \Chow^{g+u}(S)
\end{equation}
where $p\colon \logDRL(\ca L) \to S$, and write $\DR(\ca L) = \DR(\ca L)[u]$. If we omit the $\ca L$ we refer to the universal version where $S = \PIC^{tot0}$ and $\ca L$ is the inverse line bundle. 
\end{definition}
In particular, if $\phi_\ca L\colon S \to \PIC^{tot0}$ classifies $\ca L$ then $\phi^*\DR[u] = \DR(\ca L)[u]$. 

 With this notation we can state a more precise version of \ref{preintrotheorem:DR}. 
\begin{introtheorem}\label{introtheorem:DR} 
For all $u \ge 0$, the equality 
\begin{equation}
\DR(\ca L)[u] = [r^{u+1}\epsilon_*c_{g+u}(-R\pi_*\ca L^{1/r})]_{r=0}
\end{equation}
holds in $\Chow^{g+u}(S)$, where $\polarcls$ is the first Chern class of the tautological line bundle, and $[\ \ ]_{r=0}$ indicates taking the $r$-constant term of an eventually-polynomial expression in $r$. In particular by taking $u=0$ we obtain
\begin{equation}
\DR(\ca L) = [r\epsilon_*c_{g}(-R\pi_*\ca L^{1/r})]_{r=0}. 
\end{equation}
\end{introtheorem}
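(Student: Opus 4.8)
The plan is to carry out the four-step strategy outlined in the introduction. First I would reduce to the universal case: all the ingredients of both sides --- the moduli of $r$th roots, the derived pushforward $R\pi_*$ (which after a sufficiently positive twist $D$ is represented by a two-term complex of vector bundles), the ground line bundle, the ambient modification $\tilde S^\ca L$ together with its l.c.i.\ map to $\Jac^{\ul 0}$, and proper pushforward --- commute with base change, so since $(C/S,\ca L)$ is classified by a map $S\to\PIC^{tot0}$ it suffices to treat the universal case $S=\PIC^{tot0}$ with $\ca L$ tautological, where $\tilde S^\ca L$ is smooth and $\tilde S^\ca L\to\Jac^{\ul 0}$ is l.c.i.\ of codimension $g$. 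Over $S^{1/r}$ I would then write $R^\bullet\pi_*\ca L^{1/r}=[\ca E\xrightarrow{\phi}\ca F]$ as in \ref{eq:irr_complex}, pass to the projective bundle $\bb P(\pi_*\ca L^{1/r}(D))$, and, following Fulton \cite[\S14]{Fulton1984Intersection-th}, form the bundle $\ca V$ --- the pullback of $\ca F$ tensored with $\ca O(1)$ --- together with the section $s$ induced by $\phi$, so that $c_{g+u}(-R\pi_*\ca L^{1/r})$ is the pushforward to $S^{1/r}$ of $c_{\on{top}}(\ca V)$, with the power of $\ca O(1)$ occurring in $c_{\on{top}}(\ca V)$ absorbing the index $u$.

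Next I would decompose $c_{\on{top}}(\ca V)$ into the localised top Chern class contributions of the irreducible components of the normal cone of the zero locus $(s=0)$, as recalled in \ref{rem:ltcc_recap}. Exactly one component $Z_0$ is distinguished: its generic point lies over the locus of irreducible curves, where a nonzero section of $\ca L^{1/r}$ forces $\ca L^{1/r}\cong\ca O$, so $Z_0$ is the trivial-root locus lying over $\logDRL(\ca L)$, it meets $(s=0)$ in the expected codimension $g$, and its localised contribution is just its Gysin-refined fundamental class. Over $Z_0$ the map $\epsilon$ is a $\pmb\mu_r$-gerbe onto $\logDRL(\ca L)$, contributing one factor of $r$ under $\epsilon_*$, while the tautological root restricts there to an $r$th root of (the pullback of) the ground line bundle of $\ca L$, so that $c_1$ of the root-side ground bundle equals $\tfrac1r c_1(\ca G^\vee)$ and accounts for the remaining factor $r^u$. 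Combining the smooth-curve identity \ref{eq:irr_formula} with the description of $\logDR$ as an l.c.i.\ pullback of the unit section, the distinguished contribution to $r^{u+1}\epsilon_*c_{g+u}(-R\pi_*\ca L^{1/r})$ equals $p_*(\logDR(\ca L)\cdot c_1(\ca G)^u)$ and is independent of $r$.

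It remains to show that each boundary component $Z$, whose generic curve $C_Z$ is reducible, contributes a term divisible by a strictly positive power of $r$. To $Z$ I would attach two invariants: the excess dimension $e=\dim Z-\dim Z_0\ge0$ and the degree of $\epsilon$ over $\epsilon(Z)$. Since $C_Z$ carries a section of $\ca L^{1/r}$ vanishing on a parasitic subcurve $C_0$, and the $r$-torsion line bundles on $C_0$ act trivially on such a section, the fibre of $\epsilon$ over $\epsilon(Z)$ has at least $r^{2g(C_0)}$ components; a Grothendieck--Riemann--Roch computation on the twisted curve --- which also establishes the eventual polynomiality in $r$ --- then shows that the combined contribution of these components to $r^{u+1}\epsilon_*c_{g+u}(-R\pi_*\ca L^{1/r})$ is an eventually-polynomial expression divisible by $r^{2(g(C_0)-e)}$, the excess dimension lowering the $r$-valuation by $2e$. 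It then suffices to prove the geometric inequality $g(C_0)>e$ of \ref{eq:g0e}: this follows from the combinatorial estimate of \ref{lem:codim_lower_bound} --- for a vine curve as in \ref{eg:vine} one has $e=g(C_0)+b(\Gamma)-\#E$ while $b(\Gamma)=\#E-1$, forcing $e=g(C_0)-1$ --- and from \ref{lem:finalA} in the general case. Granting this, every boundary contribution is divisible by $r^2$, hence vanishes in the $r$-constant term, so only the distinguished term survives; by the first reduction this proves the general statement.

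The hard part will be this last inequality, together with the Riemann--Roch bookkeeping that converts it into a statement about $r$-valuations. Bounding the excess dimension of an arbitrary boundary component --- which lives inside the degeneracy locus of $R^\bullet\pi_*\ca L^{1/r}$ over a possibly very reducible curve --- in terms of the genus of the parasitic subcurve calls for a delicate analysis of the normal cone of $(s=0)$ and of how it interacts with the combinatorics of the dual graph; as noted in the introduction and illustrated by \ref{fig_vine}, the sharpness of the bound is less transparent than one might expect. A secondary technical point is to make Fulton's localised-Chern-class formalism interact cleanly with the base-change reductions and with the stack-theoretic and logarithmic features of the twisted curve and its moduli of $r$th roots.
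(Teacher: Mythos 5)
Your proposal follows essentially the same four-step strategy as the paper's proof: reduction to a universal situation, passage to the projective bundle of sections, decomposition of the top Chern class into localised contributions of the distinguished cones, and the estimate $\ord_r \ge 2(g(C_0)-e)-1$ from Riemann--Roch combined with the inequality $g(C_0)>e$ (proved via the codimension count of \ref{lem:codim_lower_bound} and \ref{lem:finalA}) to kill the boundary contributions. The only real divergences are technical: the paper reduces not to $\PIC^{tot0}$ (where no universal twisting divisor $D$ exists) but, via proper covers, subdivisions and stabilisation, to quasi-compact opens of $\Jac_{g,n}$ with $D$ supported on the markings; and it phrases the boundary estimate as a congruence modulo $r$ rather than literal polynomiality, since the ambient ring $\Chow^d(\bb P_S(\ca E))$ itself depends on $r$, with \ref{lem:poly_divis_translate} translating back to the constant-term statement.
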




\subsection{Computing logDR}\label{intro:logDR}

Suppose we wish to compute the class of $\logDR(\ca L)$ itself, without pushing forward to $S$. It is not immediately clear what this means, since $\logDRL(\ca L)$ is not contained in $S$, and so it does not make sense to try to write $\logDR(\ca L)$ in terms of classes on $S$. A solution to this, introduced in \cite{Holmes2017Extending-the-d,Holmes2017Multiplicativit}, is to work in the \emph{logarithmic Chow ring} of $S$: the colimit of the Chow rings of suitable log modifications\footnote{A modification of schemes is a proper birational morphism. A log blowup is a blowup in a sheaf of monoid ideals. A log modification is a proper representable log monomorphism which can be refined by log blowups. Informally, a log modification can be thought of as an iterated blowup in boundary strata. } of $S$. In essence, this means that we embed $\logDRL(\ca L)$ in some sufficiently fine log blowup $\tilde S \to S$, compute the class of $\logDR(\ca L)$ on $\tilde S$, and can then guarantee that for any other choice of log blowup of $S$ the resulting classes will coincide after pullback to a common refinement. Further discussion of the log Chow ring can be found in \cite{Holmes2017Multiplicativit,Holmes2021Logarithmic-int,Molcho2021The-Hodge-bundl,Botero2021Chern-Weil-and-,Dang2022Intersection-th,Molcho2021A-case-study-of,Holmes2022Logarithmic-double}. 

We are primarily interested in the case where $C \to S$ is smooth over a dense open; in this case the log modification are just a special class of (schematic) modifications, in particular they are proper and birational. However, in general log modifications need not be birational, and setting up the theory of log Chow rings correctly in this context is quite delicate, see \cite{Barrott2019Logarithmic-Cho}. We sidestep these technicalities via the observation that, if $\tilde S \to S$ is a log modification and $i\colon \logDRL(\ca L) \to \tilde S$ is a factorisation of $p\colon \logDRL(\ca L) \to S$ such that $i$ is \emph{strict}\footnote{The log structure on $\logDRL(\ca L)$ induced by \ref{eq:logDRL} coincides with the pullback log structure from $\tilde S$. }, then $i$ is a closed immersion and the class $i_*\logDR(\ca L)$ determines the logDR cycle in the log Chow ring. 

We will construct such factorisations $\logDRL(\ca L) \to \tilde S \to S$ using the theory of \emph{compactified Jacobians} in \ref{sec:proof_of_thm_logDR}. For now we note that such a factorisation \emph{exists} whenever $C \to S$ is either stable or admits a section through the smooth locus, and that the space $\tilde S$ carries a `quasi-stable model' $\tilde C \to C\times_S \tilde S$ obtained by inserting rational curves at some nodes, and a `quasi-stable line bundle' $\tilde{\ca L}$ which differs from $\ca L$ by a twist by a piecewise linear function. 

This line bundle $\tilde{\ca L}$ has two crucial properties: it is fibrewise trivial \emph{exactly} on the image of $\logDRL(\ca L) \to \tilde S$, and it admits a non-zero global section on a fibre \emph{if and only if} it is trivial on the fibre. This simplifies greatly the comparison to a suitable degeneracy locus, so that it is no longer necessary to take a constant term in $r$. We state a precise version of \ref{preintrotheorem:logDR}. 

\begin{introtheorem}\label{introtheorem:logDR}
For all $u \ge 0$, for all $r \ge 1$, we have
\begin{equation}
\logDR(\ca L) \cdot \polarcls^u = r^{u+1}\epsilon_*c_{g+u}(-R\pi_*\tilde {\ca L}^{1/r}).
\end{equation}
\end{introtheorem}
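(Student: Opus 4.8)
### Proof proposal

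The plan is to reduce \ref{introtheorem:logDR} to a clean instance of the degeneracy-locus machinery set up in the introduction, exactly as in the irreducible case \ref{eq:irr_formula}, but now performed relative to the space $\tilde S$ carrying the quasi-stable model $\tilde C \to \tilde S$ and the quasi-stable line bundle $\tilde{\ca L}$. The key structural input, supplied by the construction of $\tilde S$ via compactified Jacobians (to be established in \ref{sec:proof_of_thm_logDR}), is that $\tilde{\ca L}$ is fibrewise trivial \emph{exactly} on $\logDRL(\ca L) \hookrightarrow \tilde S$, and moreover $h^0(\tilde{\ca L}_s) \ne 0$ if and only if $\tilde{\ca L}_s$ is trivial. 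This is the property that fails for $\ca L$ itself and forces the $r \to 0$ limit in \ref{introtheorem:DR}; here it is built in, so no constant term is needed.

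First I would work over $\tilde S$ and its $r$th-root cover $\epsilon\colon \tilde S^{1/r} \to \tilde S$, with universal root $\tilde{\ca L}^{1/r}$ on the $r$-twisted quasi-stable curve $\pi\colon \tilde C^{1/r} \to \tilde S^{1/r}$. Because on every fibre $\tilde{\ca L}^{1/r}$ has a nonzero global section iff it is trivial iff $\tilde{\ca L}$ is trivial (the root of a trivial bundle is a torsion bundle, which on these quasi-stable curves is nontrivial unless it is the structure sheaf — exactly the point that makes the argument work without twisted curves), the degeneracy locus of the two-term complex $R^\bullet\pi_*\tilde{\ca L}^{1/r} = [\ca E \xrightarrow{\phi} \ca F]$ (for a sufficiently positive divisor $D$ as in \ref{eq:irr_complex}) is supported precisely on $\epsilon^{-1}(\logDRL(\ca L))$, with no parasitic boundary components. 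One then runs Steps 2–3 of the introduction verbatim: pass to the projective bundle $p\colon \bb P(\ca E) \to \tilde S^{1/r}$, where the section $s$ induced by $\phi$ has zero locus of the expected codimension $g+u$ meeting transversally (after accounting for the projectivisation, which records $c_1(\ca G)^u$ via powers of $\ca O(1)$), so that $c_{g+u}(-R\pi_*\tilde{\ca L}^{1/r})$ is represented by this clean zero locus. Pushing forward along $p$ and then along $\epsilon$, the Thom–Porteous/localised-Chern-class computation gives $\epsilon_*c_{g+u}(-R\pi_*\tilde{\ca L}^{1/r})$ as $\tfrac1{r^{u+1}}$ times the class of $\logDRL(\ca L)$ with its ground-class corrections — the factor $r$ coming from the extra $\pmb\mu_r$-automorphism rigidified as in the discussion after \ref{eq:JPPZintro}, and the factor $r^u$ from the interaction of the $r$th-root twist with $c_1(\ca G) = c_1(\ca O(1))$ on $\bb P$. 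Matching this against the definition of $\logDR(\ca L)$ as the Gysin pullback of the unit section along $\tilde S^\ca L \to \Jac^{\ul 0}_{C/S}$, and using that forming Chern classes commutes with this l.c.i. pullback, yields the asserted equality in $\Chow^g(\logDRL(\ca L))$, hence in the log Chow ring of $S$.

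The genuine content, and the step I expect to be the main obstacle, is verifying that the degeneracy locus of $R^\bullet\pi_*\tilde{\ca L}^{1/r}$ really is \emph{reduced of the expected dimension} on the projective bundle — i.e. that the excess-dimension index $e$ of \ref{eq:g0e} vanishes identically for the quasi-stable model, not merely that $g(C_0) > e$. This is where the quasi-stability of $\tilde{\ca L}$ (stability condition on the compactified Jacobian) has to be used quantitatively: one must show, via the same Riemann–Roch computation underlying \ref{lem:codim_lower_bound} and \ref{lem:finalA}, that for a quasi-stable line bundle there are simply no subcurves on which a nonzero section can vanish, so the normal cone to the zero locus is a bundle and the localised Chern class is the honest top Chern class. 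A secondary technical point is checking that $\tilde S^{1/r} \to \tilde S$ and the whole construction commute with base change, so that it suffices to treat the universal case $S = \PIC^{tot0}$ where $\tilde S$ is smooth and the l.c.i. hypotheses are automatic; and that the factorisation $\logDRL(\ca L) \to \tilde S$ is strict, so that the resulting class is well-defined in the log Chow ring independently of the chosen log modification. Once $e \equiv 0$ is in hand, the remainder is bookkeeping of the $r$-powers identical to the proof of \ref{introtheorem:DR} with the $r \to 0$ limit trivialised.
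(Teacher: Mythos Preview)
Your proposal has the right instincts---quasi-stability kills the parasitic boundary components, so the degeneracy locus coincides set-theoretically with $\logDRL(\ca L)$---but the plan contains a genuine gap at exactly the point you flag as the main obstacle.

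You propose to work on $\tilde S^{1/r}$ (and the projective bundle over it) and to show that the degeneracy locus there has the \emph{expected dimension}, i.e.\ that $e \equiv 0$. This is false in general. The absence of parasitic subcurves (condition (3) on the compactified Jacobian) guarantees that the degeneracy locus is \emph{set-theoretically} equal to $\epsilon^{-1}(\logDRL(\ca L))$, but says nothing about its codimension: $\logDRL(\ca L) \hookrightarrow \tilde S$ can have arbitrary excess dimension (take $\ca L$ fibrewise trivial, so $\logDRL = \tilde S$). Your suggested mechanism---``no subcurves on which a section can vanish, so the normal cone is a bundle''---conflates two different sources of excess: boundary components (which quasi-stability does eliminate) and excess dimension of the main component (which it does not). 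The Riemann--Roch computation behind \ref{lem:codim_lower_bound} cannot help here, since it computes the codimension on the Jacobian, not on $\tilde S$.

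The paper's fix is to run the entire argument on the compactified Jacobian $\ca J^{1/r}$ rather than on $\tilde S^{1/r}$. There the locus $E^r$ where the universal root is trivial is regularly embedded of codimension $g$ automatically, so Thom--Porteous applies directly to give $[E^r] = c_g(-R\pi_*\ca F^{1/r})$ with no excess. One then pulls back to $\tilde S$ along the map $\tilde S \to \ca J$, using that this pullback is exactly the Gysin pullback defining $\logDR(\ca L)$ (your last sentence gestures at this, but treats it as a compatibility check rather than the load-bearing step). For $u \ge 1$ the paper does not use the projective bundle at all: instead it uses the exact sequence $0 \to \ca F(-p) \to \ca F \to \ca F|_p \to 0$ for a section $p$, together with the fact that $-R\pi_*\ca F(-p)$ is a genuine rank-$g$ bundle (so its Chern classes vanish above degree $g$), to obtain the recursion $c_{g+d}(-R\pi_*\ca F) = c_1(p^*\ca F^\vee)\, c_{g+d-1}(-R\pi_*\ca F)$, which unwinds to the ground-class powers. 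Finally, one small point: your claim that a nontrivial $r$-torsion bundle on a quasi-stable curve has no sections is not automatic on reducible curves; the paper proves it as a separate lemma by lifting a section to its $r$th power and invoking condition (3) on $\ca J$.
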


An immediate consequence of this theorem is that, for fixed $u$, the classes $r^{u+1}\epsilon_*c_{g+u}(-R\pi_*\tilde{\ca L}\ca F^{1/r})$ form a \emph{constant} polynomial in $r$; all coefficients of positive powers of $r$ vanish. This potentially-fruitful source of relations in the log tautological ring has yet to be explored.

\subsection{Riemann--Roch computations on the Picard stack}
\label{sec:intro_RR}



We have seen that expressions of the form $r^u\epsilon_*c_{g+u}(-R\pi_*\ca L^{1/r})$, and their constant terms in $r$, allow us to determine (log) double ramification cycles. It remains to see how we can compute these classes. If $S = \Mbar_{g,n}$ then we can write these expressions in terms of the standard tautological classes, but for more general $S$ we need a more flexible framework, which we take from \cite{Bae2020Pixtons-formula}. The idea is to view the pair $(C, \ca L)$ as being equivalent to a map from $S$ to the universal Picard stack $\PIC$. Similarly, twisted $r$th roots of $\ca L$ determine a map from $S^{1/r}$ to the stack $\epsilon\colon \PIC^{1/r} \to \PIC$ of twisted $r$th roots of the universal line bundle. 
We write $\pi\colon \ca C \to \PIC^{1/r}$ for the universal $r$-twisted curve, and $\ca L^{1/r}$ for the universal $r$th root of the universal bundle on $\PIC$. 


Strata of $\PIC^{1/r}$ are indexed by triples $(\Gamma, \delta, w)$ where 
\begin{enumerate}
\item
$\Gamma$ is a prestable graph of genus $g$ in the sense of \cite[0.3.1]{Bae2020Pixtons-formula}, with vertices $V(\Gamma$), edges $E(\Gamma)$, and half-edges $H(\Gamma)$;
\item $\delta\colon V(\Gamma)\to \bb Z$ is a function;
\item $w\colon H(\Gamma) \to \{0, \dots, r-1\}$ is a \emph{weighting modulo $r$ balancing $\delta$}; in other words, for each edge $e = \{h, h'\}$ we have $w(h) + w(h') = 0 \mod r$, and for each vertex $v$ we have $\sum_{h \in H(v)} w(h)  =\delta(v)\mod r$, where $H(v)$ is the set of half-edges attached to the vertex $v$. 
\end{enumerate} 
We write $\rgraphs^{1/r}$ for the set of such triples (shortened to $\rgraphs$ if $r=1$). Given also an integer $i$, we write $\rgraphs ^{1/r}(i) \subseteq \rgraphs^{1/r}$ for the subset where the graph has exactly $i$ edges. Given $(\Gamma, \delta, w) \in \rgraphs^{1/r}$ we construct the stratum $  \PIC_{\Gamma, \delta, w}^{1/r}$ (of codimension equal to the number of edges of $\Gamma$) via a commutative diagram
\begin{equation}
 \begin{tikzcd}
  \PIC_{\Gamma, \delta, w}^{1/r} \arrow[r, hook, "2"] \ar[rrr, bend left, "j_{\Gamma, \delta, w}"]& \PIC_{\Gamma, \delta}^{1/r} \ar[r, hook, "1"] & \PIC^{1/r}_\Gamma \ar[r] \ar[d] & \PIC^{1/r}\ar[d] \\
  &&\mathfrak M_\Gamma[r]\ar[r] & \mathfrak M[r];\\
\end{tikzcd}
\end{equation}
 here $\mathfrak M_\Gamma[r]$ is the underlying reduced substack of $\mathfrak M[r] \times_{\mathfrak M} \mathfrak M_\Gamma = \prod_v \mathfrak M_{g(v), H(v)}$, the square is a pullback, the arrow (1) is the inclusion of the connected component where the multidegree of $\ca L$ specialises to $\delta$, and the arrow (2) is the inclusion of the connected component where the characters of $\ca L^{1/r}$ at nodes are given by $w$. The codimension of the image of $\PIC_{\Gamma, \delta, w}^{1/r}$ in $\PIC^{1/r}$ via the finite representable map $j_{\Gamma, \delta, w}$ is the number of edges of $\Gamma$, and the degree over the image is $\abs{\on{Aut} (\Gamma, \delta, w)}$. 

 We define analogues of the normally decorated strata classes of \linebreak \cite{Molcho2021The-Hodge-bundl}. 
 For $(\Gamma, \delta, w) \in \rgraphs^{1/r}$, we consider the ring $\bb Z[\psi_h: h \in H(\Gamma)]\otimes \bb Z[\xi_e: e \in E(\Gamma)]$, and define a map from this ring to $\Chow(\PIC^{1/r}_{\Gamma, \delta, w})$ sending $\psi_h$ to the pullback of $c_1(p_h^*\omega_\pi)$ from $\mathfrak M_{g(v), H(v)}$ where $h \in H(v)$ and $p_h\colon \PIC_{\Gamma, \delta, w} \to \ca C$ corresponds to $h$, and sending $\xi_e$ to $c_1(p_e^*\ca L^{1/r})$ where $p_e\colon\PIC^{1/r}_{\Gamma, \delta, w} \to \ca C$ corresponds to $e$. Given a polynomial $f$ in the $\psi_h$ and $\xi_e$, we abusively identify it with its image in $\Chow(\PIC^{1/r}_{\Gamma, \delta, w})$. We can think of the class $(j_{\Gamma, \delta, w})_*f$ as a tautological class on $\PIC^{1/r}$.

With this notation in hand we can expresses the Chern character of $R\pi_*\ca L^{1/r}$ as a  `bulk' term plus a sum of such classes over strata of codimension 1 (we write $\tilde \Gamma = (\Gamma, \delta, w) \in \rgraphs^{1/r}$ to shorten the notation); for the convenience of the reader we restate \ref{preintrotheorem:logDR} now that all terms are defined. 
 \begin{introtheorem}\label{introtheorem:ch_formula} 
 We have  \begin{multline}\label{ch_formula_strata} 
  \on{ch}_{m}(R\pi_*\ca L^{1/r})=
\pi_* \frac{\ca B_{m+1}\left(\frac{c_1\ca L}{r}, c_1\omega\right)}{(m+1)!} + \\
\sum_{\tilde \Gamma \in \rgraphs^{1/r}(1)} \frac{r}{\abs{\on{Aut}(\tilde \Gamma)}}\sum_{\substack{{p+q=m+1}\\{p\ge 2}}} 
 \frac{B_{p}(\frac{w(h)}{r})}{p!q!}(j_{\tilde \Gamma})_*\left((\xi_e)^{q} \frac{\psi_h^{p-1} - (-\psi_{h'})^{p-1}}{\psi_h + \psi_{h'}}\right), 
\end{multline}
where $\ca B_m(x,y) = y^mB_m(x/y)\in \bb Q[x,y]$ is the homogenised version of the usual Bernoulli polynomial $B_m(x)$, and $e = \{h, h'\}$ is the unique edge of $\Gamma$ (note that the summands a-priori depend on a choice between $h$ and $h'$, but the summand is invariant under changing that choice).  
\end{introtheorem}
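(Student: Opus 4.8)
The plan is to compute $\on{ch}(R\pi_*\ca L^{1/r})$ by Grothendieck--Riemann--Roch applied to the map $\pi\colon \ca C \to \PIC^{1/r}$ from the universal $r$-twisted curve, and then to push the resulting expression down through the resolution of singularities of the coarse curve. First I would recall that $\on{ch}(R\pi_*\ca L^{1/r}) = \pi_*\!\left(\on{ch}(\ca L^{1/r})\,\on{Td}^\vee(\Omega_\pi)\right)$, where $\Omega_\pi$ is the relative logarithmic (or dualizing) sheaf of the twisted curve and the dual Todd class accounts for the nodes; the subtlety is that $\pi$ is representable but $\ca C$ is a stacky curve with $\bb Z/r$-gerbes at the nodes, so one must be careful about how $\pi_*$ interacts with the orbifold structure. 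The standard device (going back to Chiodo's computation of Chern classes on moduli of $r$th roots, \cite{Chiodo} if cited, and followed in \cite{Janda2016Double-ramifica}) is to pass to the normalization of the nodes, where the twisted structure contributes the factor $w(h)/r$ at each branch. The bulk term $\pi_*\ca B_{m+1}(c_1\ca L/r, c_1\omega)/(m+1)!$ is exactly what one gets from the smooth part: $c_1(\ca L^{1/r}) = c_1(\ca L)/r$ away from the nodes, and $\on{ch}\cdot\on{Td}^\vee$ repackages into homogenised Bernoulli polynomials in the usual way.

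Next I would analyse the node contributions. Each node of the twisted curve, sitting over a codimension-$1$ boundary stratum indexed by some $\tilde\Gamma = (\Gamma,\delta,w)\in\rgraphs^{1/r}(1)$, contributes a term supported on that stratum. The relevant local computation is: near a node, $\ca L^{1/r}$ restricted to the two branches carries characters $w(h)$ and $w(h') = -w(h) \bmod r$, and the excess/error term in GRR at the node (the difference between naive $\pi_*$ and the correct one, or equivalently the contribution of the exceptional divisor when one resolves $xy=0$) is governed by the Bernoulli polynomial $B_p(w(h)/r)$ evaluated at the fractional character. The combinatorial factor $\dfrac{\psi_h^{p-1} - (-\psi_{h'})^{p-1}}{\psi_h + \psi_{h'}}$ is the familiar ``vertex algebra'' / Faber--Pandharipande-type expression that arises when one pushes forward powers of the node section and sums the geometric series over the chain of exceptional $\bb P^1$'s (or, in the twisted picture, from the self-intersection formula for the node divisor whose normal bundle has first Chern class $-\psi_h - \psi_{h'}$). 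I would invoke the gluing diagram for $\PIC^{1/r}_{\tilde\Gamma}$ given in the excerpt, together with the identification $j_{\tilde\Gamma,\ast}$ of degree $\abs{\on{Aut}(\tilde\Gamma)}$ onto its image, to account for the $\dfrac{r}{\abs{\on{Aut}(\tilde\Gamma)}}$ prefactor: the $r$ comes from the order of the automorphism group of the twisted node (the gerbe), partially cancelling against the normalization, and the $\abs{\on{Aut}(\tilde\Gamma)}$ from the generic degree of the stratum map. The restriction $p\ge 2$ reflects that the $p=0$ and $p=1$ pieces are either absorbed into the bulk term or cancel by the balancing condition $w(h)+w(h')\equiv 0$.

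The strategy for organising all of this cleanly is to reduce to the universal case $S=\PIC^{tot0}$ (everything commutes with base change, as emphasised in \S\ref{sec:intro_RR}), and to work one stratum at a time: the codimension-$1$ strata contribute the displayed sum, and one must check that strata of codimension $\ge 2$ contribute nothing to $\on{ch}(R\pi_*\ca L^{1/r})$ at the level of the formula --- i.e. that the higher-codimension node corrections are already accounted for by iterating the codimension-$1$ formula, or more precisely that the Chern character, unlike the total Chern class, linearises the node contributions so that no genuinely new deeper-stratum terms appear. Concretely I would write $\on{ch}(R\pi_*\ca L^{1/r})$ on a suitable resolution $\tilde{\ca C}\to\ca C$ where the twisted curve becomes an honest nodal curve with chains of $\bb P^1$'s, apply ordinary GRR there, and then push forward, summing the geometric series attached to each chain; the orbifold/root data enters only through the residues $w(h)/r$ at the two ends of each chain, reproducing $B_p(w(h)/r)$.

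The main obstacle I anticipate is the bookkeeping at the twisted nodes: getting the factor of $r$ versus $1/r$ exactly right (the gerbe contributes $\abs{\mu_r}=r$ to the pushforward, but the root bundle $\ca L^{1/r}$ has a fractional first Chern class $1/r$ along each branch, and these must be balanced with care), and correctly matching the local GRR error term at a twisted node with $B_p(w(h)/r)$ --- including checking the asserted symmetry under swapping $h\leftrightarrow h'$, which uses $w(h')\equiv -w(h)$ together with the identity $B_p(1-x)=(-1)^p B_p(x)$. I would isolate this into a single local lemma (the ``twisted node contribution'') and then the global statement follows by summing over $\rgraphs^{1/r}(1)$ and invoking functoriality of the construction under the stratum maps $j_{\tilde\Gamma}$.
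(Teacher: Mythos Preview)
Your proposal is correct and follows essentially the same route as the paper: pass to the desingularisation $\tilde C \to C$ obtained by inserting chains of $r-1$ rational curves at each node, apply ordinary GRR there, split the integrand into a bulk factor $\exp(c_1\ca L/r)\cdot\on{td}^\vee(\omega)$ and a node/exceptional-divisor factor, and invoke Chiodo's computation \cite{Chiodo2008Towards-an-enum} for the latter before reorganising by codimension-$1$ strata via $B_p(1-x)=(-1)^pB_p(x)$. One small correction: the restriction $p\ge 2$ is not due to absorption into the bulk or the balancing condition, but simply because the $p=1$ summand has numerator $\psi_h^{0}-(-\psi_{h'})^{0}=0$.
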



\section{Invariance properties}\label{sec:invariances}

An important feature of the universal DR cycle $\DR = \DR[0]$ of \ref{def:DR} explored in \cite{Bae2020Pixtons-formula} is its `invariance' properties; behaviour under various pullbacks and shifts of indices. Here we prove analogues of these invariances for the ground classes $\DR[u]$ of \ref{def:DR}. For invariances II, III, and VI we also give analogous statements for the classes $c_\bullet(-\pi_! \ca L^{1/r})$, as they are used in the \emph{proof} of \ref{thm:uni_Hodge_DR} (our setup makes these trivial for II and III, but VI needs some work). Invariance VII is new, and does not make sense for $\DR$, but only for the classes $c_\bullet(-\pi_! \ca L^{1/r})$. We also use it in the proof of \ref{thm:uni_Hodge_DR}.

\subsection{The classes $\pcc_g^{r,e,d}$}
We first set up some notation for the classes $c_\bullet(-\pi_! \ca L^{1/r})$. Given positive integers $r$ and $d$, we define $\PIC_g^{1/r,d}$ to be the stack of pairs $(\ca C/S, \ca F)$ where $\ca C/S$ is an $r$-twisted prestable curve of genus $g$, and $\ca F$ is a line bundle on $\ca C$ such that $\ca F^{\otimes r}$ descends to a line bundle of degree $d$ on the relative coarse moduli space $C/S$ of $\ca C/S$. We write 
\begin{equation}
\epsilon\colon \PIC^{1/r, d}_{g} \to \PIC_{g}^d
\end{equation}
for the map sending $(\ca C/S, \ca F)$ to $(C/S, \ca F^{\otimes r})$.

Chern classes make sense in the Chow cohomology of $\PIC_g^{1/r,d}$. Moreover, the map $\epsilon\colon \PIC^{1/r, d}_{g} \to \PIC_{g}^d$ is proper, flat, lci, and relatively DM, and so we can push operational classes along it. \begin{definition}\label{def:omega_classes}
Given $r$, $d$ as above, and a non-negative integer $e$, we define 
\begin{equation}
\pcc_{g}^{r,  e, d} \coloneqq \epsilon_* c_e(-\pi_! \ca L^{1/r}) \in \Chow^e(\PIC_{g}^d)
\end{equation}
and
\begin{equation}
\pcc_{g}^{r, \bullet, d} \coloneqq \epsilon_* c(-\pi_! \ca L^{1/r}) \in \Chow^*(\PIC_{g}^d). 
\end{equation}
\end{definition}

To state the invariances, we fix a little more notation. Let $C/S$ be a prestable curve, $p_1, \dots, p_n$ a collection of disjoint sections through the smooth locus, $\ca L$ on $C$ a line bundle of relative degree $d$, and $\ul a = (a_1, \dots, a_n)$ a vector of integers with sum $\frak a$. We write 
\begin{equation}
\phi_{\ca L, \ul a}\colon S \to \PIC_g^{d - \frak a}
\end{equation}
for the map classifying $\ca L(-\sum_i a_i p_i)$.

Our first four invariances are very straightforward, but we include them to facilitate comparison to the invariances listed in \cite{Bae2020Pixtons-formula}. 

\subsection{Invariance I: Dualising}
Write $\ca L^\vee$ for the dual of $\ca L$, and set $- \ul a = (-a_1, \dots, -a_n)$. In the notation of \ref{eq:logDRL}, write $\beta^\sf{max}$ for the PL function on $\logDRL_g$ given by the maximum value that $\beta$ takes on vertices of the tropicalisation of the universal curve, which determines a boundary divisor $c_1(\ca O(\beta^\sf{\max}))$ on $\logDRL_g$. 
 
\begin{lemma}Suppose $\frak a = d$. Then 
\begin{equation}
\phi_{\ca L^\vee, -\ul a}^*p_*(\logDR \cdot \polarcls^u) = \phi_{\ca L, \ul a}^*p_*(\logDR \cdot (-\polarcls+ c_1(\ca O(\beta^\sf{max}))^u)), 
\end{equation}
in particular if $u=0$ we recover 
\begin{equation}
\phi_{\ca L^\vee, -\ul a}^*\DR_{g} = \phi_{\ca L, \ul a}^*\DR_{g}. 
\end{equation}
\end{lemma}
\begin{proof}
The proof of \cite[Lemma 6.5]{Chen2022A-tale-of-two-m} goes through essentially unchanged in this more general context. 
\end{proof}

\subsection{Invariance II: Unweighted markings}
Let $p_{n+1}\colon S \to C$ be another section through the smooth locus, disjoint from $p_1, \dots, p_n$. Let $\ul a_0 = (a_1, \dots, a_n, 0)$. Then the maps 
\begin{equation}
\phi_{\ca L, \ul a}\colon S \to \PIC_g^{d - \frak a} \;\;\; \text{and} \;\;\; \phi_{\ca L, \ul a_0}\colon S \to \PIC_g^{d - \frak a}
\end{equation}
are tautologically equal, so that pulling back the classes $\pcc_{g,n}^{r,e,d}$ along either map gives the same result. 
If $\frak a = d$ then the same holds for $\DR_g[u]$. 

\subsection{Invariance III: Weight translation}
Let $\ul b = (b_1, \dots, b_n) \in \bb Z^n$, and define $\ca L_\ul b = \ca L(\sum_i b_i p_i)$. Then the maps 
\begin{equation}
\phi_{\ca L, \ul a}\colon S \to \PIC_g^{d - \frak a} \;\;\; \text{and} \;\;\; \phi_{\ca L_\ul b, \ul a + \ul b}\colon S \to \PIC_g^{d - \frak a}
\end{equation}
are tautologically equal, so that pulling back the classes $\pcc_{g,n}^{r,e,d}$ along either map gives the same result. 
If $\frak a = d$ then the same holds for $\DR_g[u]$. 

\subsection{Invariance IV: Twisting by pullback}

Let $\ca B$ be a line bundle on $S$, and consider the map 
\begin{equation}
\phi_{\ca L\otimes \pi^*\ca B, \ul a}\colon S \to \PIC_g^{d - \frak a}
\end{equation}
classifying $\ca L\otimes \pi^*\ca B(-\sum_i a_i p_i)$. 

\begin{lemma}Suppose $d = \frak a$. Then 
\begin{equation}
\phi_{\ca L\otimes \pi^*\ca B, \ul a}^*\DR_g[u] = c_1(\ca B^\vee)^u\phi_{\ca L, \ul a}^*\DR_g[u]. 
\end{equation}
\end{lemma}
\begin{proof}
In the notation of \ref{eq:logDRL}, an isomorphism 
\begin{equation}
\pi^*\ca F(\beta) \cong \ca L \otimes \pi^*\ca B (-\sum_i a_i p_i)
\end{equation}
is equivalent to an isomorphism 
\begin{equation}
\pi^*(\ca F\otimes \ca B^\vee)(\beta) \cong \ca L (-\sum_i a_i p_i). \qedhere
\end{equation}
\end{proof}

\subsection{Invariance V: Vertical twisting}
Consider a partition of the genus, marking data, and degree
\begin{equation}
g = g_1 + g_2, \;\;\; N_1 \sqcup N_2 = \{1, \dots, n\}, \;\;\; d_1 + d_2 = d
\end{equation}
which is not symmetric (i.e. $(g, N_1, d_1) \neq (g_2, N_2, d_2)$). This determines divisors $\Delta_1$, $\Delta_2$ in the universal curve over $\PIC_{g, n}^d$ corresponding to the $(g_1, N_1, d_1)$ component and the $(g_1, N_1, d_1)$ component of the curve over the separating boundary divisor corresponding to the partition. We use the same notation for the pullbacks of these divisors to $C$ along the classifying map of $\ca L$, yielding maps 
\begin{equation}
\phi_{\ca L(\Delta_1), \ul a}\colon S \to \PIC_g^{d - \frak a}\;\;\; \text{and}\;\;\;\phi_{\ca L(\Delta_2), \ul a}\colon S \to \PIC_g^{d - \frak a}
\end{equation}
Note that $\Delta_1 + \Delta_2$ is a pullback from $S$, and so the effect on $\DR_g[u]$ of twisting by this divisor is determined by Invariance IV. It therefore suffices to describe the result of twisting by one of the $\Delta_i$. 

\begin{lemma}
Suppose that $\frak a = d$, and that\footnote{Here we follow the sign conventions of \cite{Marcus2017Logarithmic-com}, which are the opposite of those in \cite{Chen2022A-tale-of-two-m}. }
\begin{equation}
d_1 + \sum_{i \in N_1} a_i \le d_2 + \sum_{i \in N_2} a_i. 
\end{equation}
Then 
\begin{equation}
\phi_{\ca L(\Delta_1), \ul a}^*\DR_g[u]= \phi_{\ca L, \ul a}^*\DR_g[u]. 
\end{equation}
\end{lemma}
\begin{proof}
In the notation of \ref{eq:logDRL}, suppose that 
\begin{equation}
\ca L\left(-\textstyle\sum_i a_i p_i\right) \cong \pi^* \ca F (\beta)
\end{equation}
with $\beta$ a PL function on $C$, totally ordered on vertices, and with minimum value $0$. Let $\delta$ be the PL function taking the value $0$ on the component corresponding to $\Delta_2$, and with slope $+1$ along the separating edge to the component corresponding to $\Delta_1$. Then 
\begin{equation}
\ca O_C(\delta) = \ca O_C(\Delta_1), 
\end{equation}
so
\begin{equation}
\ca L(\Delta_1)\left(-\textstyle\sum_i a_i p_i\right) \cong \pi^* \ca F (\beta + \delta), 
\end{equation}
and $\beta + \delta$ still has totally ordered values with minimum $0$ (perhaps after subdivision of $S$ to restore the total order, which will not affect the outcome after pushing down). 
\end{proof}

\subsection{Invariance VI: Partial stabilisation}

Suppose we are given a partial stabilisation of curves $C' \stackrel{f}{\to} C \stackrel{\pi}{\to} S$, and write $\ca L' = f^*\ca L$. The sections $p_i$ lift uniquely to sections $p_i'$ of $C'$, and we have a map 
\begin{equation}
\phi_{\ca L', \ul a}\colon S \to \PIC_{g}^{d - \frak a}
\end{equation}
classifying $\ca L'(-\sum_i a_i p_i')$. 

\begin{proposition}
In the Chow ring of $S$ we have equalities
\begin{enumerate}
\item if $d = \frak a$ then $\phi_{\ca L, \ul a}^*\DR_{g}[u] = \phi_{\ca L', \ul a}^*\DR_{g}[u]$; 
\item $\phi_{\ca L, \ul a}^* \pcc_{g}^{r, e} = \phi_{\ca L', \ul a}^* \pcc_{g}^{r, e}$. 
\end{enumerate}
\end{proposition}
\begin{proof}
Claim (1) is proven by a minor variation on the proof of \cite[Lemma 58]{Bae2020Pixtons-formula}. We now prove claim (2).  Let $\tilde S \to S$ be the space of twisted $r$th roots of $\ca L$, which is naturally identified with the space of twisted $r$th roots of $\ca L'$. Let $\tilde {C'}\stackrel{\tilde f}{\to} \tilde C \stackrel{\tilde \pi}{\to} \tilde S$ be the universal twisted curves, with $\ca F$ and $\ca F'$ the respective $r$th roots; so $\ca F'  = \tilde f^*\ca F$. 

Since formation of $\pcc_{g}^{r, e}$ commutes with base-change, it suffices to construct an isomorphism 
\begin{equation}
R^\bullet \pi_*\ca F\iso R^\bullet(\tilde \pi \circ \tilde f)_*\ca F'. 
\end{equation}

By the Grothendieck spectral sequence, it is enough to show that 
\begin{equation}\label{eq:stabilisation_cohomology}
    R^\bullet \tilde f_* \ca F' = \ca F. 
\end{equation}

Working locally on $\tilde C$, we may assume that $\ca F$ (and hence $\ca F'$) is trivial. Choose an \'etale cover $U \to \tilde C$ by a scheme; the map $\tilde f\colon \tilde C' \to \tilde C$ is representable by \cite[Lemma 4.4.3]{abramovich2002compactifying}, hence $U' \coloneqq \tilde C' \times_{\tilde C} U$ is also a scheme. We can then apply \cite[\href{https://stacks.math.columbia.edu/tag/0E7L}{Tag 0E7L}]{stacks-project} to again reduce to the case where $\tilde S$ is a point. We are reduced to the following situation: $U = \Spec k[x,y]/(xy)$, and $f\colon U'\to U$ is obtained by inserting a chain of $r$ rational curves at the node of $U$. We need to show that 
\begin{equation}
    R^\bullet \tilde f_* \ca O_{U'} = \ca O_U. 
\end{equation}
This follows from an entertaining argument with the normalisation, which is surely well-known. Let $\tau\colon V \to U'$ denote the normalisation, so $V$ consists of two copies of $\bb A^1_k$ and $r$ copies of $\bb P^1_k$. Write $q_1, \dots, q_{r+1}$ for the nodes of $U'$, mapping to the node $q$ of $U$. Applying $Rf_*$ to the exact sequence
\begin{equation}
0 \to \ca O_{U'} \to \tau_*\ca O_V \to \bigoplus_{i=1}^{r+1} k_{q_i} \to 0
\end{equation}
yields
\begin{equation}
    0 \to f_*\ca O_{U'} \to f_*\tau_*\ca O_V \to f_*\bigoplus_{i=1}^{r+1} k_{q_i} \to R^1f_*\ca O_{U'} \to R^1f_*(\tau_*\ca O_V). 
\end{equation}
Now $R^1f_*(\tau_*\ca O_V) = R^1(f\circ\tau)_*\ca O_V$ since $\tau$ is affine, and this vanishes by considering each connected component separately. It remains to show that 
\begin{equation}
    f_*\tau_*\ca O_V \to f_*\bigoplus_{i=1}^{r+1} k_{q_i}
\end{equation}
is a surjection with kernel $\ca O_U$. This is immediate by writing out both sides to obtain
\begin{equation}
    k[x] \oplus k[y] \oplus \bigoplus_r k_q \to \bigoplus_{r+1} k_q, 
\end{equation}
with the map given by first evaluating at the point $q:x = y = 0$, and then applying the $r \times (r+1)$ matrix 
\begin{equation}
    \mat{1&-1&0&0&\dots & 0\\
    0 & 1 & -1 & 0 & \dots & 0\\
    0 & 0 & 1 & -1 & \dots & 0\\
    & & & \dots & & \\
    0 & \dots & 0 & 0 & 1 & -1 }. \qedhere
\end{equation}
\end{proof}

\subsection{Invariance VII: Shift of an index}
This is a new invariance, which was not present in \cite{Bae2020Pixtons-formula}. It changes the total degree, and so makes sense only for $\pcc$, not for $\DR$. 

Fix a positive integer $r$ and an index $j \in \{1, \dots, n\}$. Define $\ul a^j = (a_1, \dots, a_j + r, \dots, a_n)$. Consider the map
\begin{equation}
\phi_{\ca L, \ul a^j}\colon S \to \PIC_{g}^{d - \frak a -r}
\end{equation}
classifying $\ca L(-rp_j + \sum_i a_i p_i )$. 
\begin{lemma}\label{lem:index_shift}
\begin{equation}
\phi_{\ca L, \ul a}^*\pcc_{g}^{r, \bullet, d - \frak a}  = \left(1 + \frac{c_1(p_j^*\ca L)}{r}\right)\phi_{\ca L, \ul a^j}^*\pcc_{g}^{r, \bullet, d- \frak a - r}. 
\end{equation}
\end{lemma}
\begin{proof}
Let $\tilde S \to S$ be the moduli of $r$th roots of $\ca L(-\sum_i a_i p_i)$, for example defined as the fibre product 
\begin{equation}
\tilde S = S \times_{\phi_{\ca L, \ul a}, \PIC_g^{d - \frak a}, \epsilon} \PIC_g^{1/r, d - \frak a}. 
\end{equation}
Let $\pi \colon \tilde C\to \tilde S$ be the universal twisted curve, and $\ca F$ the universal $r$-th root of $\ca L(-\sum_i a_i p_i)$ on $\tilde C$. Define analogously the space $\tilde S'$ of $r$th roots of $\ca L(-rp_j + \sum_i a_i p_i )$, with universal twisted curve $\tilde C'$ and universal $r$th root $\ca F'$. Then there are canonical isomorphisms $\tilde S = \tilde S'$ and $\tilde C = \tilde C'$ given by the formula 
\begin{equation}
\ca F' = \ca F(-p_j). 
\end{equation}
This yields a short exact sequence
\begin{equation}
0 \to \ca F' \to \ca F \to \ca F|_p\to 0. 
\end{equation}
We find 
\begin{equation}
\pi_! \ca F = \pi_!\ca F' + \pi_!(\ca F|_{p_j}) = \pi_!\ca F' + {p_j}^*\ca F
\end{equation}
in $K(\tilde S)$. Multiplicativity of the total Chern class yields
\begin{equation}
c(\pi_! \ca F) =  c(\pi_!\ca F') c( p_j^*\ca F). 
\end{equation}
Pushing forward along $\tilde S \to S$, we see
\begin{equation}
\phi_{\ca L, \ul a}^*\pcc_{g}^{r, \bullet, d - \frak a}  = \phi_{\ca L, \ul a^j}^*\pcc_{g}^{r, \bullet, d- \frak a - r}\epsilon_*(1 + c_1(p_j^*\ca F)). 
\end{equation}
Finally, since we do not allow twisting at markings, we obtain 
\begin{equation}
\epsilon_*(c_1(p_j^*\ca F))  = c_1(p_j^*\ca L)/r. \qedhere
\end{equation}
\end{proof}

\begin{remark}
Pulling back along the map $\Mbar_{g,n}\to \PIC_{g}^{2g-2 + n - \sum_i a_i}$ classifying $\omega_\log^s(-\sum_i a_i p_i)$, this lemma immediately recovers \cite[Theorem 4.1 (ii)]{GLN}. 
\end{remark}

\section{Proof of \ref{introtheorem:DR}}
\label{sec:proof_of_uni_Hodge_DR}

\subsection{The universal Hodge-DR theorem}\label{sec:proof_of_hodge_DR}

The proof of the `$u=0$' case of the Hodge-DR conjecture (see \ref{sec:hodge_DR}) in \cite{Bae2020Pixtons-formula} proceeds by first generalising the conjecture to a statement on the universal Jacobian, and then proving it by a reduction to results of \cite{Janda2018Double-ramifica}. Here we will proceed in a similar way for the general case. In this subsection we explain the `universal' generalisation of the Hodge-DR conjecture (for arbitrary $u$), and in \ref{sec:proof_of_Hodge_DR} we prove the generalised version by reduction to a result of \cite{Fan2019Higher-genus-re}. 

\begin{theorem}\label{thm:polynomiality}
After pulling back to any finite-type $k$-scheme mapping to $\PIC_{g}^0$, the classes $r^{2e - 2g + 1}\pcc_{g}^{r, e, d}$ form a polynomial in $r$ for $r$ sufficiently large. 
\end{theorem}
If we do not pull back to a finite-type scheme, then we get a power series in $r$, rather than a polynomial. 
\begin{proof}
Unless $d=0$ then this class is constant for $r$ sufficiently large; it is $1$ if $e=0$ and $0$ otherwise. As such, we reduce to the case where $d=0$. The result is then proven in \ref{sec:proof_of_Hodge_DR}. 
\end{proof}


\begin{definition}
Given a non-negative integer $u$, we write $\rpcc_{g}^{e}[r^u]$ for the coefficient of $r^u$ in the polynomial given by $r^{2e - 2g + 1}\pcc_{g}^{r, e, 0}$ for $r$ sufficiently large. 
\end{definition}

We can now state precisely the universal Hodge-DR theorem: 
\begin{theorem}\label{thm:uni_Hodge_DR}
For every non-negative integer $u$ we have
\begin{equation}
\rpcc_{g}^{g+u}[r^u] = \DR_{g}[u] \in \Chow^{g + u}(\PIC_{g}^0). 
\end{equation}
\end{theorem}
In fact both classes make sense outside total degree zero, and both vanish there for trivial reasons, so the reader who prefers can read the both theorem as a statement over the whole of $\PIC_g$. 


Pulling back \ref{thm:uni_Hodge_DR} along the map $\Mbar_{g,n} \to \PIC_g^0$ given by $\omega_\log^s (-\sum_i a_i p_i)$ immediately yields the following. 
\begin{corollary}
The Hodge-DR conjecture is true. 
\end{corollary}

\subsection{The proof of \ref{thm:polynomiality,thm:uni_Hodge_DR}}
\label{sec:proof_of_Hodge_DR}
Using the machinery we have set up so far, the proof now mirrors closely that of the main theorem of \cite{Bae2020Pixtons-formula}. We avoid repeating the steps in full detail, but instead focus on the points where our argument is significantly different. 

\Cref{thm:uni_Hodge_DR} is an equality in the Chow cohomology of the Artin stack $\PIC_g^0$. Our first move is to re-write it in a more concrete form, as a statement about arbitrary families of prestable curves over DM stacks. 

\begin{theorem}\label{thm:Hodge_DR_Expanded}
Let $(C/S, p_1, \dots, p_n)$ be a family of prestable curves with $S$ a quasi-compact DM stack of finite type over $k$, let $\ca L_0$ be a line bundle on $C$ of relative degree $d$, and let $a_1, \dots, a_n$ be integers summing to $d$. Define $\ca L = \ca L_0(-\sum_i a_i p_i)$, and let
\begin{equation}
\phi\colon S \to \PIC_g^{1/r, 0}
\end{equation}
be the classifying map of $\ca L$. Then $\phi^*\Xi_g^{g + u}$ is a polynomial in $r$ for $r$ sufficiently large, and we have
\begin{equation}
\phi^*\DR_g[u] = \phi^*\Xi_g^{g + u}[r^u] \in \Chow^{g+u}(S),
\end{equation}
where as always the notation $[r^u]$ means to take the coefficient of $r^u$ in the polynomial obtained by making sufficiently large $r$. 
\end{theorem}
This theorem is easily seen to be equivalent to \Cref{thm:uni_Hodge_DR}. 
\begin{proof}[Proof of \ref{thm:Hodge_DR_Expanded}] The theorem is a statement about arbitrary families, but the proof begins by showing that it is in fact sufficient to treat rather special families. 

\textbf{Step 1:} By \cite[Lemma 43]{Bae2020Pixtons-formula}, we may assume that $C/S$ has \emph{enough sections}: for every geometric fibre of $C/S$, the complement of the union of irreducible components carrying markings is a disjoint union of trees on which $\ca L$ is trivial. 

\textbf{Step 2:} By repeated replacing $a_i$ by $a_i + 1$ and $\ca L$ by $\ca L(p_i)$ (here we use Invariances II and III), we may assume that $\ca L$ is \emph{relatively sufficiently positive}: $\ca L$ is relatively base-point free, and satisfies $R^1\pi_*\ca L = 0$. 

\textbf{Step 3:} Copying the proof of \cite[Lemma 41]{Bae2020Pixtons-formula}, we reduce to the case treated in Step 4 (here we use Invariance VI). 

\textbf{Step 4:}
Fix a positive integer $l$, and write
\begin{equation}
S = \Mbar_{g,n}(\bb P^l, dL)' \subseteq \Mbar_{g,n}(\bb P^l, dL) 
\end{equation}
for the open locus of stable maps $f\colon C \to \bb P^l$ of degree $d$ such that, on each geometric fibre,  $H^1(C, f^*\ca O(1)) = 0$. The fundamental and virtual fundamental classes of $\Mbar_{g,n}(\bb P^l, dL)'$ coincide by an easy calculation (\cite[Lemma 39]{Bae2020Pixtons-formula}). 

Let $C/S$ be the universal curve, let $\ca L_0$ be the pullback of $\ca O(1)$ from $\bb P^l$, and define
\begin{equation}
\phi_\ca L\colon \Mbar_{g,n}(\bb P^l, dL)' \to \PIC_{g}^0; \;\;\; (C/S, f\colon C \to \bb P^l) \mapsto f^*\ca O(1)(-\sum_i a_i p_i), 
\end{equation}
and we will prove \ref{thm:uni_Hodge_DR} after pulling back along $\phi_\ca L$. The precise statement is \ref{lem:apply_FWY}. 
\end{proof}
\begin{lemma}\label{lem:apply_FWY}
Let $C/S$, $\ca L$ be as in Step 4 of the proof of \ref{thm:Hodge_DR_Expanded}. Then $\phi_\ca L^*\rpcc_{g}^{g+u}$ is polynomial in $r$ for $r$ sufficiently large, and we have 
\begin{equation}
\phi_\ca L^*\DR_{g}[u] = \phi_\ca L^*\rpcc_{g}^{g+u}[r^u]. 
\end{equation}
\end{lemma}
The proof below follows closely that of \cite[Theorem 6.4]{Chen2022A-tale-of-two-m}, where we reduce to a result of \cite{Fan2019Higher-genus-re}. 
\begin{proof}
As $\Mbar_{g,n}(\bb P^l, dL)'$ is smooth and has fundamental class equal to its virtual fundamental class, it is enough to check this equality after capping with the (virtual) fundamental class.

Write $\Mbar_{g,n,\ul a, \beta}^\sim(\bb P^l, \ca O (1))$ for the space of stable rubber maps of degree $\beta$ to the projectivised line bundle $\ca O(1)$ on $\bb P^l$, relative to the divisors given by the zero and infinity sections of the projectivised bundle, with contact orders given by $\ul a$. Write $\Psi_\infty$ for the class of the cotangent line of the marked divisor at infinity. We write 
\begin{equation}
p\colon \Mbar_{g,n, \ul a, \beta}^\sim(\bb P^l, \ca O (1)) \to \Mbar_{g, n, \beta}(\bb P^l)
\end{equation}
for the projection, and we denote by $\Mbar_{g,n,\ul a, \beta}^\sim(\bb P^l, \ca O (1))'$ the preimage of $\Mbar_{g, n, \beta}(\bb P^l)'$. A slight generalisation of \cite[Lemma 6.2]{Chen2022A-tale-of-two-m} then yields
\begin{equation}
\phi_\ca L^*\DR_{g}[u] = p_*([\Mbar_{g,n, \ul a, \beta}^\sim(\bb P^l, \ca O (1))']^\vir \cdot \Psi_\infty^u). 
\end{equation}

Write $I_\infty \subseteq \{1, \dots, n\}$ for the set of those $i$ such that $\ul a_i < 0$. Define a vector $\ul a(r)$ by 
\begin{equation}
\ul a(r)_i = \begin{cases}a_i & \text{if } a_i \ge 0\\
r + a_i & \text{else}. 
\end{cases}
\end{equation}
Define
\begin{equation}
\begin{split}
\phi_r\colon\Mbar_{g,n}(\bb P^l, dL)' & \to \PIC_{g}^{1/r, -r\#I_\infty}\\
(C/S, p_1, \dots, p_n, f) & \mapsto f^*\ca O(1)(-\sum_i \ul a(r)_i p_i). 
\end{split}
\end{equation}
Write 
\begin{equation}
\on{Ch}_{g, \ul a(r)}^{r, \bullet} = \sum_{d\ge 0}r^{2g - 2d + 1}\phi_r^*\pcc_{g}^{r, d} \in \Chow^*(\Mbar_{g,n}(\bb P^l, dL)')[r]. 
\end{equation}
Writing $\sf{ev}_i\colon \Mbar_{g,n}(\bb P^l, dL)'  \to \bb P^l$ for the $i$th evaluation map and setting $\nu_i = c_1(\sf{ev}_i^*\ca O(1))$, \ref{lem:index_shift} (Invariance VII) yields 
\begin{equation}\label{eq:Ch_shift}
\on{Ch}_{g, \ul a(r)}^{r, \bullet} = \left(\phi_\ca L^*\rpcc_g^{r, \bullet}\right)\cdot\prod_{i \in I_\infty}(1 + r (a_i \psi_i + \nu_i));
\end{equation}
note that the $r$ has moved to the numerator relative to the expression in \ref{lem:index_shift}, because of the factors of $r$ relating $\rpcc$ and $\pcc$. 
Applying \cite[Corollary 4.4]{Fan2019Higher-genus-re} yields for $r$ sufficiently large that this expression is polynomial in $r$, and that 
\begin{equation*}
\begin{split}
p_*([\Mbar_{g,n,\ul a, \beta}^\sim & (\bb P^l, \ca O (1))]^\vir \cdot \Psi_\infty^u) \\
& = \sum_{\ul e \in \bb Z_{\ge 0}^{I_\infty}} \prod_{i \in I_\infty} (-a_i \psi_i - \nu_i)^{e_i} \cdot  \left(\phi_\ca L^*\on{Ch}_{g, \ul a(r)}^{ r, u + g - \abs{\ul e}}[r^{u - \abs{\ul e}}]\right)\\
& = \left[  \sum_{\ul e \in \bb Z_{\ge 0}^{I_\infty}} \prod_{i \in I_\infty} (-r(a_i \psi_i + \nu_i))^{e_i}  \phi_\ca L^*\on{Ch}_{g, \ul a(r)}^{r, \bullet} \right]_{\text{codim} g + u}[r^u] \\
& = \left[  \prod_{i \in I_\infty} \frac{1}{1 + r(a_i \psi_i + \nu_i)} \phi_\ca L^* \on{Ch}_{g, \ul a(r)}^{r, \bullet} \right]_{\text{codim} g + u}[r^u] \\
& = \left[  \phi_\ca L^*\rpcc_g^{r, \bullet}\right]_{\text{codim} g + u} [r^u]\\
\end{split}
\end{equation*}
where for the last equality we apply \ref{eq:Ch_shift}. 
\end{proof}

\section{Proof of \ref{introtheorem:logDR}}\label{sec:proof_of_thm_logDR}

\begin{definition}
Let $C/S$ be a log curve. A \emph{quasi-stable model} for $C/S$ is a map of log curves $f\colon C'\to C$ such that the geometric fibres of $f$ are points or smooth rational curves meeting the remainder of the curve at 2 points. If $C' \to C$ is a quasi-stable model and $\ca F$ is a line bundle on $C'$, we say $\ca F$ is \emph{admissible} if it has degree $1$ on every exceptional curve of $C' \to C$. 
\end{definition}

\begin{definition}
Let $C/S$ be a log curve. The \emph{quasi-stable Jacobian} $\Jac^Q_{C/S}$ represents the strict \'etale sheafification of the functor $\cat{LogSch}_S^\op \to \cat{Set}$ sending a log scheme $T \to S$ to the set of isomorphism classes $(C' \to C_T, \ca F)$ where $C' \to C_T$ is a quasi-stable model and $\ca F$ is an admissible line bundle on $C'$ of total degree 0. 
\end{definition}

In general the quasi-stable jacobian is highly non-separated, but it is designed to contain compactifications of the classical jacobian. 
\begin{definition}\label{def:comp_jac}
Let $C/S$ be a log curve. A \emph{compactified Jacobian} for $C/S$ is an open subspace of $\ca J \subseteq \Jac^Q_{C/S}$ such that
\begin{enumerate}
\item $\ca J$ is proper over $S$;
\item $\ca J$ contains the subfunctor $\Jac^{\ul 0}_{C/S}$ of line bundles of multidegree $\ul 0$; 
\item\label{item:comp_jac_sec} For every geometric point $(C' \to C_t, \ca F)$ of $\ca J$, either $\ca F$ is trivial or $h^0(C', \ca F) = 0$. 
\end{enumerate}
\end{definition}

If $C/S$ is stable or admits at least one section through the smooth locus then such a compactified Jacobian exists, by \cite{Kass2017The-stability-s,Esteves2001Compactifying-t}. The first two conditions are common requirements for compactified Jacobians. The third condition also holds in many standard examples, in particular 
\begin{enumerate}
\item
Suppose that $\ca J$ is associated to a stability condition $\theta$ in the sense of \cite{Kass2017The-stability-s}. If $\theta$ is nondegenerate then condition (1) is satisfied, if $\theta$ is small (i.e. the trivial bundle is stable) then condition (2) holds, and in this situation condition (3) also holds by \cite[Lemma 8]{Holmes2017Jacobian-extens}, based on a lemma of Dudin \cite{Dudin2015Compactified-un}. This case is used in \cite{Holmes2022Logarithmic-double}. 
\item Suppose that $C/S$ admits a section $x$ through the smooth locus, let $\theta$ be a small stability condition (not assumed nondegenerate, so we can take the zero stability condition), and $\ca J$ be the quasi-stable Jacobian associated to $(\theta, x)$ in the sense of \cite{Esteves2001Compactifying-t} (the argument is essentially the same as in the first case). 
\end{enumerate}


From now on we assume that a compactified jacobian $\ca J$ exists, and we choose one. We write $C' \to C\times_S \ca J$ for the universal quasi-stable curve and $\ca F$ for the universal quasi-stable line bundle. 

For later use, we show that condition (3) for $\ca F$ implies that the same holds for all $r$th roots: 
\begin{lemma}\label{lem:cohomology_cond_hold_for_root}
Let $(C' \to C_t, \ca F)$ be a geometric point of $\ca J$, let $r$ be a positive integer, and let $\ca F^{1/r}$ be an $r$th root of $\ca F$ on a twisted curve $\ca C \to C'$. Then either $\ca F^{1/r}$ is trivial or $h^0(\ca C, \ca F^{1/r}) = 0$. 
\end{lemma}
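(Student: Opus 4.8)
The plan is to reduce the statement to condition (3) of \ref{def:comp_jac} by passing to $r$th powers of sections. Assume $h^0(\ca C,\ca F^{1/r})\neq 0$ and fix a non-zero global section $s$ of $\ca F^{1/r}$. First I would check that $s^{\otimes r}$ is again non-zero: each irreducible component of the twisted curve $\ca C$ is integral (the local ring at its generic point is the function field of the corresponding component of the nodal curve $C'$), so $s$ restricts non-trivially to some component, and hence so does its $r$th power. Since $\ca F^{1/r}$ is an $r$th root on $\ca C$, we have $(\ca F^{1/r})^{\otimes r}\cong\mathfrak r^*\ca F$ for $\mathfrak r\colon\ca C\to C'$ the coarse-moduli-space map; combined with $\mathfrak r_*\ca O_{\ca C}=\ca O_{C'}$ and the projection formula this gives $H^0(\ca C,(\ca F^{1/r})^{\otimes r})=H^0(C',\ca F)$, so $s^{\otimes r}$ produces a non-zero global section of $\ca F$ on $C'$.

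Now condition (3) of \ref{def:comp_jac} forces $\ca F$ to be trivial. Hence $\mathfrak r^*\ca F\cong\ca O_{\ca C}$, so $\ca F^{1/r}$ is $r$-torsion and $s^{\otimes r}$ is a non-zero element of $H^0(\ca C,\ca O_{\ca C})$. The curve $\ca C$ is proper, reduced and connected --- its underlying topological space is that of the connected nodal curve $C'$, which is a quasi-stable model of the connected prestable curve $C_t$ --- so $H^0(\ca C,\ca O_{\ca C})$ is the ground field, and therefore $s^{\otimes r}$ is a nowhere-vanishing section of $\ca O_{\ca C}$. The last step is the elementary local remark that if $f^r$ is a unit in a local ring then so is $f$: writing $s$ locally as multiplication by an element $f$ of the relevant stalk of $\ca F^{1/r}$, the fact that $s^{\otimes r}$ corresponds to $f^r$ up to units shows that $s$ is nowhere vanishing, hence that it induces an isomorphism $\ca O_{\ca C}\iso\ca F^{1/r}$.

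I do not anticipate a real obstacle here; this is a short argument whose only delicate points are the standard bookkeeping facts about twisted curves used above --- that an $r$th root satisfies $(\ca F^{1/r})^{\otimes r}\cong\mathfrak r^*\ca F$ on the nose (the boundary twist appearing in \S2.2 intervenes only after passing to the coarse space or to a desingularisation), that $\mathfrak r_*\ca O_{\ca C}=\ca O_{C'}$, and that $H^0(\ca C,\ca O_{\ca C})$ is the ground field. One should also be mildly careful that the irreducible components of $\ca C$ are integral, which is precisely what makes $s\mapsto s^{\otimes r}$ send non-zero sections to non-zero sections.
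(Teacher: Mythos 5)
Your argument is correct and is essentially the paper's own proof: take the $r$th power of a non-zero section, invoke condition (3) of \ref{def:comp_jac} to trivialise $\ca F$, and deduce that $s$ is nowhere vanishing. You simply spell out the intermediate bookkeeping (non-vanishing of $s^{\otimes r}$ on integral components, $H^0(\ca C,\ca O_{\ca C})$ being the ground field, the local unit argument) that the paper leaves implicit.
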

\begin{proof}
Suppose that $h^0(\ca C, \ca F^{1/r}) \neq 0$, so there exists a non-zero global section $s$ of $\ca F^{1/r}$. Then $s^r$ is a non-zero global section of $\ca F$, hence $\ca F$ is trivial by \ref{def:comp_jac} (3), hence $s^r$ is nowhere $0$. This implies that $s$ is nowhere 0, hence it is a trivialising section for $\ca F^{1/r}$. 
\end{proof}

We write $\epsilon\colon \ca J^{1/r}\to \ca J$ for the stack of twisted $r$th roots of the universal line bundle $\ca F$ on $C'$. Let $E^r$ be the closed locus in $\ca J^{1/r}$ where the universal root is trivial. This is a closed regularly-embedded subscheme of codimension $g$ (since it is contained in $\Jac^{\ul 0}_{C/S}$). 

\begin{lemma}\label{lem:Er_push_E}
We have $\epsilon_*[E^r] = \frac 1 r [E]$
\end{lemma}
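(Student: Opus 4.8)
The plan is to show that $\epsilon$ restricts on $E^r$ to a $\pmb\mu_r$-gerbe over $E$, and then to conclude from the fact that such a gerbe has degree $1/r$ (recall $r$ is coprime to the characteristic, so $\pmb\mu_r$ has order $r$).

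First I would check that $\epsilon$ carries $E^r$ into $E$: if the universal root is trivial on a fibre, then so is its $r$th power, which is $\ca F$ pulled back to the twisted curve, so $\ca F$ is itself trivial there. As $\epsilon$ is finite, this yields a finite morphism $\bar\epsilon\colon E^r\to E$, and it is surjective since over any geometric point of $E$ the trivial line bundle on the canonical $r$-twisted model of the corresponding quasi-stable curve is an $r$th root of $\ca F$ lying in $E^r$. It is then enough to prove that $\bar\epsilon$ is a $\pmb\mu_r$-gerbe over $E$, since then $\bar\epsilon_*[E^r] = \tfrac1r[E]$, and hence $\epsilon_*[E^r] = \iota_*\bar\epsilon_*[E^r] = \tfrac1r[E]$, where $\iota\colon E\hookrightarrow\ca J$ is the closed immersion.

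To identify $\bar\epsilon$ as such a gerbe I would describe its fibres as groupoids. Over a geometric point of $E$, an object of $E^r$ is a twisted model of the corresponding quasi-stable curve together with the trivial root and the chosen trivialisation of its $r$th power. By the construction of the stack of twisted roots (see e.g.\ \cite{Chiodo2008Towards-an-enum}), the twisted structure at a node is governed by the local character of the root, which for the trivial root is forced; hence the twisted model is the canonical one and carries no moduli, and any two such objects become isomorphic over the geometric point. The automorphisms of such an object are the bundle automorphisms $\lambda$ of the trivial line bundle on the connected proper twisted curve satisfying $\lambda^{\otimes r}=\on{id}$; since $H^0$ of the structure sheaf of a connected twisted curve is the ground field, these are exactly the scalars in $\pmb\mu_r$. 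So each fibre is $B\pmb\mu_r$ and $\bar\epsilon$ is a $\pmb\mu_r$-gerbe --- even a neutral one, via the global trivial-root section.

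I expect the one genuinely delicate point to be this last fibre computation: precisely, that over the unit section the passage to twisted roots introduces no moduli beyond the central $\pmb\mu_r$ of root-scalings. Everything else is formal once this is in place. A more structural route to the same conclusion, which I might use instead, is to factor $\epsilon$ near the unit section through the rigidification of $\ca J^{1/r}$ along these scalars: that rigidification map is a $\pmb\mu_r$-gerbe, whereas the residual map down to $\ca J$ restricts to an isomorphism on unit sections (over the semiabelian locus it is the multiplication-by-$r$ isogeny $M\mapsto M^{\otimes r}$, which fixes the identity), and composing the two again exhibits $\bar\epsilon$ as a $\pmb\mu_r$-gerbe.
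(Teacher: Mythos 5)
Your proof is correct and follows essentially the same route as the paper's: both identify $E^r\to E$ as a $\pmb\mu_r$-gerbe (the trivial bundle having a unique trivial root, with automorphism group the scalars $\pmb\mu_r$), whence the factor $1/r$. Your fibre-by-fibre verification is just a more detailed writing-out of the paper's one-line justification.
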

\begin{proof}
$E^r$ is contained in the locus where the map $C' \to C\times_S \ca J$ is an isomorphism and the line bundle $\ca F$ has multidegree 0. On this locus the map $\epsilon$ is \'etale, and the induced map $E^r \to E$ is a $\mu_r$-gerbe, since a bundle is trivial if and only if exactly one of its $r$th roots is trivial (the gerbe structure comes from the action of $\mu_r$ on the roots, and contributes the $\frac 1 r$). 
\end{proof}

\begin{theorem}\label{thm:logDR_chern}
Fix a non-negative integer $u$. In $\Chow^g(\ca J^{1/r})$ we have the equality 
\begin{equation}
[E^r] \cdot  c_1(p^*\ca F^{1/r})^u = c_{g+u}(-R\pi_*\ca F^{1/r})
\end{equation}
where $p\colon S \to C$ is any section. 
\end{theorem}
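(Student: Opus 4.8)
The plan is to transplant the computation of \ref{eq:irr_formula} from the locus of irreducible curves to the quasi-stable Jacobian $\ca J^{1/r}$: the role played there by the fact that a degree-$0$ line bundle with a nonzero global section on an irreducible curve is trivial is now played by \ref{lem:cohomology_cond_hold_for_root}, which is precisely what forces the relevant degeneracy locus to have the expected codimension. All classes occurring commute with base change and $\Chow$ is computed with finite-type test schemes, so we may work after base-change to a finite-type $T\to\ca J^{1/r}$; over such a $T$ we choose an effective Cartier divisor $D$ on $\ca C$, supported away from the twisted locus and sufficiently relatively ample, so that $R^1\pi_*(\ca F^{1/r}(D))=0$ and $\ca E\coloneqq\pi_*(\ca F^{1/r}(D))$ and $\ca F\coloneqq\pi_*(\ca F^{1/r}(D)|_D)$ are vector bundles of ranks $d+1-g$ and $d$ respectively, where $d=\deg D$. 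Pushing forward $0\to\ca F^{1/r}\to\ca F^{1/r}(D)\to\ca F^{1/r}(D)|_D\to 0$ yields an exact sequence
\begin{equation*}
0\to\pi_*\ca F^{1/r}\to\ca E\xrightarrow{\sigma}\ca F\to R^1\pi_*\ca F^{1/r}\to 0,
\end{equation*}
so that $[\ca E\xrightarrow{\sigma}\ca F]$ represents $R\pi_*\ca F^{1/r}$ and $-R\pi_*\ca F^{1/r}=\ca F-\ca E$ in $K$-theory, exactly as for the complex \ref{eq:irr_complex} on the irreducible locus.

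First I would pass to the projective bundle $\phi\colon\bb P(\ca E)\to\ca J^{1/r}$ of rank-one subbundles, with tautological line subbundle $\ca O_{\ca E}(-1)\hookrightarrow\phi^*\ca E$, and form the rank-$d$ bundle $\ca V\coloneqq\phi^*\ca F\otimes\ca O_{\ca E}(1)$ with the section $s$ induced by $\ca O_{\ca E}(-1)\hookrightarrow\phi^*\ca E\xrightarrow{\phi^*\sigma}\phi^*\ca F$. The zero locus of $s$ is the locus where the tautological sub-line of $\ca E$ lies in $\ker\sigma=\pi_*\ca F^{1/r}$; by \ref{lem:cohomology_cond_hold_for_root}, on a geometric fibre $\pi_*\ca F^{1/r}$ is nonzero exactly when the universal root is trivial there, in which case it has rank $1$ (the curve being connected). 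So set-theoretically the zero locus of $s$ is the lift $\widetilde{E^r}\hookrightarrow\bb P(\ca E)$ of $E^r$ determined by the line subbundle $\pi_*\ca F^{1/r}|_{E^r}\hookrightarrow\ca E|_{E^r}$; as $E^r$ is regularly embedded of codimension $g$ and $\phi$ restricts to an isomorphism $\widetilde{E^r}\xrightarrow{\sim}E^r$, this locus has codimension $d=\on{rk}\ca V$ in $\bb P(\ca E)$, the expected one. Hence $s$ is a regular section and $c_d(\ca V)=[\widetilde{E^r}]$ as cycles (the multiplicity being $1$; see below). Pushing forward and applying \ref{lem:chern_subtraction} together with \cite[Prop.~14.2]{Fulton1984Intersection-th}, exactly as in the proof of \ref{introtheorem:DR}, gives
\begin{equation*}
c_{g+u}(-R\pi_*\ca F^{1/r})=\phi_*\bigl(c_1(\ca O_{\ca E}(1))^u\cdot c_d(\phi^*\ca F-\ca O_{\ca E}(-1))\bigr)=\phi_*\bigl(c_1(\ca O_{\ca E}(1))^u\cdot[\widetilde{E^r}]\bigr).
\end{equation*}
Since $\ca O_{\ca E}(1)$ restricts on $\widetilde{E^r}\cong E^r$ to the ground line bundle $p^*\ca F^{1/r}$ (by the same computation as for \ref{eq:logDR_to_proj}, cf.\ \cite[Lemma~6.1]{Chen2022A-tale-of-two-m}), the projection formula identifies the right-hand side with $[E^r]\cdot c_1(p^*\ca F^{1/r})^u$, which is the claim. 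In particular the case $u=0$ is just Thom--Porteous for the rank-drop locus of $\sigma$, which by \ref{lem:cohomology_cond_hold_for_root} is $E^r$ of expected codimension $g$.

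The one step that I expect to need genuine care is showing that the scheme-theoretic zero locus of $s$ is reduced at the generic point of $\widetilde{E^r}$ --- equivalently, that $\sigma$ drops rank transversally along $E^r$ --- so that $\widetilde{E^r}$ occurs in $c_d(\ca V)$ with multiplicity $1$ rather than merely with positive multiplicity. I would obtain this either from a first-order deformation computation (the variation of $\pi_*\ca F^{1/r}$ in the normal directions to $E^r$ inside $\ca J^{1/r}$ is governed by the Kodaira--Spencer class of the universal root and surjects onto the relevant rank-one piece of $R^1\pi_*\ca F^{1/r}$), or, more economically, by reducing as in \ref{lem:main_reduction} to the universal situation, where $E^r$ is the preimage of the unit section of a semiabelian Jacobian under an l.c.i.\ morphism and the required transversality is classical. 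Everything else is a verbatim repetition of the irreducible-locus argument of \ref{eq:irr_formula}, with \ref{lem:cohomology_cond_hold_for_root} supplying the single geometric input that makes it apply on $\ca J^{1/r}$.
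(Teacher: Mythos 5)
Your proof is correct, and for $u=0$ it is essentially the paper's argument: both identify $E^r$ with the degeneracy locus of $\sigma\colon \pi_*\ca F^{1/r}(D) \to \pi_*(\ca F^{1/r}(D)|_D)$ via \ref{lem:cohomology_cond_hold_for_root}, observe that this forces the expected codimension, and invoke Thom--Porteous (you simply run Thom--Porteous through its projective-bundle proof, exactly as the paper does for \ref{introtheorem:DR}). Where you genuinely diverge is the case $u\ge 1$. The paper never returns to $\bb P(\ca E)$: it twists down by a section, notes that condition (3) of \ref{def:comp_jac} forces $\pi_*\ca F^{1/r}(-p)=0$ universally so that $-R\pi_*\ca F^{1/r}(-p)$ is an honest rank-$g$ bundle, and extracts the recursion $c_{g+d}(-R\pi_*\ca F^{1/r}) = c_1(p^*(\ca F^{1/r})^\vee)\,c_{g+d-1}(-R\pi_*\ca F^{1/r})$ from the vanishing of $c_{g+d}$ of a rank-$g$ bundle. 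Your route instead uses $\phi_*\bigl(c_1(\ca O_{\ca E}(1))^u\cdot[\widetilde{E^r}]\bigr)$ and the projection formula, which stays uniform with the localised-Chern-class picture of \ref{introtheorem:DR} but requires identifying $\ca O_{\ca E}(1)|_{\widetilde{E^r}}$; the paper's trick avoids the projective bundle entirely for $u\ge 1$ at the cost of invoking condition (3) a second time. Two small points. First, $\ca O_{\ca E}(-1)|_{\widetilde{E^r}}$ is $\pi_*\ca F^{1/r}|_{E^r}\cong p^*\ca F^{1/r}$, so $\ca O_{\ca E}(1)$ restricts to $p^*(\ca F^{1/r})^\vee$ rather than $p^*\ca F^{1/r}$; this is consistent with the $c_1(p^*\ca F^\vee)$ appearing in the paper's own derivation and with the dual built into the definition of the ground line bundle, so it is a conventions issue, not an error in substance. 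Second, the multiplicity-one point you flag is real, and the paper is no more explicit about it than you are; either of your remedies works, and the first-order one is the honest argument: the intrinsic derivative of $\sigma$ along the fibre directions of $\Jac^{\ul 0}$ at the trivial bundle is cup product $H^0(\ca O)\otimes H^1(\ca O)\to H^1(\ca O)$, an isomorphism onto the cokernel, so the degeneracy scheme is generically reduced along $E^r$.
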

\begin{proof}
We first treat the case $u=0$. To simplify notation in this proof we write $\ca F$ in place of $\ca F^{1/r}$ for the universal line bundle on the universal twisted curve over $\ca J^{1/r}$. Let $D$ be a sufficiently ample divisor on $C'$, then the exact sequence
\begin{equation}
0 \to \ca F \to \ca F(D) \to \ca F(D)|_D \to 0
\end{equation}
yields an exact sequence 
\begin{equation}
0 \to \pi_*\ca F \to \pi_*\ca F(D) \stackrel{\sigma}{\to} \pi_*\ca F(D)|_D \to R^1\pi_*\ca F \to 0, 
\end{equation}
in which the middle two terms are vector bundles. By \ref{lem:cohomology_cond_hold_for_root} we see that $E_r$ is precisely the degeneracy locus of the map $\sigma$, which is computed by the Thom--Porteous formula \cite[14.4]{Fulton1984Intersection-th} as $c_g(-R\pi_*\ca F)$. 

We treat now the case $u \ge 1$. The sequence 
\begin{equation}
0 \to \ca F(-p) \to \ca F \to \ca F_p \to 0
\end{equation}
yields 
\begin{equation}
\pi_! \ca F = \pi_!\ca F(-p) + p^*\ca F. 
\end{equation}
and $-\pi_! \ca F (-p)$ is a vector bundle of rank $g$ (since our condition (3) on $\ca J$ implies that $\pi_*\ca F(-p) = 0$ universally). 
Now
\begin{equation}
c( -R\pi_*\ca F(-p) ) = c(-R\pi_*\ca F) (1+ c_1(p^*\ca F))
\end{equation}
and so for every positive integer $d$, taking the part in degree $g+d$ yields 
\begin{equation}
0 = c_{g+d}( -R\pi_*\ca F(-p) ) = c_{g+d}(-R\pi_*\ca F)  + c_1(p^*\ca F))c_{g+d-1}(-R\pi_*\ca F)
\end{equation}
and hence
\begin{equation}
c_{g+d}(-R\pi_*\ca F) = c_1(p^*\ca F^\vee))c_{g+d-1}(-R\pi_*\ca F)
\end{equation}
By induction on $d \ge 1$ we see that 
\begin{equation}
c_{g+d}(-R\pi_*\ca F) = c_1(p*\ca F^\vee))^d c_{g}(-R\pi_*\ca F), 
\end{equation}
and taking $d=u$ yields the required result. 
\end{proof}

It remains to put together \ref{lem:Er_push_E} and \ref{thm:logDR_chern} to deduce \ref{introtheorem:logDR}. 


Let $\ca L$ be a line bundle on $C$ of relative degree $0$. We define $\tilde S = \tilde S_{\ca J, \ca L}$ to represent the functor sending a log scheme $T/S$ to the set of tuples
\begin{equation}
(f\colon C' \to C_T, \beta)
\end{equation}
where $f\colon C' \to C_T$ is a quasi-stable model and $\beta$ is a reduced PL function, such that $(f\colon C' \to C_T, f^*\ca L(\beta))$ is a $T$-point of $\ca J$. There is a map $\tilde S \to \ca J$ sending $ (f\colon C' \to C_T, \beta)$ to $(C' \to C, f^*\ca L(\beta)))$. 

\begin{remark}
The space $\tilde S$ is a log alteration\footnote{A generalisation of log modifications which allow root stacks along the boundary. } of the fibre product over the \emph{logarithmic Picard scheme} $\LogPic_{C/S}$ of Molcho and Wise \cite{Molcho2018The-logarithmic}:
\begin{equation}
S \times_{\LogPic_{C/S}} \ca J, 
\end{equation}
where $S \to \LogPic$ is the classifying map for (the log line bundle associated to) $\ca L$, and $\ca J \to \LogPic$ sends a pair $(C' \to C_T, \ca F)$ to the log line bundle on $C_T$ associated to $\ca F$. The alteration serves to arrange the reducedness condition on the PL function (see \ref{sec:PL_functions}). 
\end{remark}

\begin{remark}
The morphism $\tilde S_{\ca J, \ca L} \to S$ is a proper log monomorphism. If $S$ is log regular then it is birational (in general it is an isomorphism over the locus $U \subseteq S$ over which $C$ is smooth). 
\end{remark}


\begin{lemma}\label{lem:E_pullback_logDR}
The schematic pullback of the zero section $E$ from $\ca J$ to $\tilde S$, equipped with its strict log structure, is equal as a log scheme to the logarithmic double ramification locus $\logDRL(\ca L)$. The pullback of the fundamental class $[E]$ is equal to the push-forward of the virtual fundamental class of $\logDRL(\ca L)$ to $\tilde S$. 
\end{lemma}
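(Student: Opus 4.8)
The plan is to prove the two assertions in turn — first the equality of log schemes, then the equality of cycles — and in both cases the crucial simplification is that over the zero section the quasi-stable subdivision is forced to be trivial.

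\emph{Matching the log schemes.} I would begin by unwinding the scheme-theoretic pullback: a $T$-point of $E\times_{\ca J}\tilde S$ is a $T$-point $(f\colon C'\to C_T,\beta)$ of $\tilde S$ whose image $(C'\to C_T,\, f^*\ca L(\beta))$ lies on the zero section $E$ of $\ca J$. The zero section is the pair consisting of the identity subdivision together with a line bundle which is, étale-locally on $T$, a pullback from $T$; since on an exceptional curve of a quasi-stable model an admissible bundle has degree $1$ rather than $0$ (the same observation used in the proof of \ref{lem:Er_push_E}), the map $f$ must be an isomorphism. Hence on this locus the datum is a reduced PL function $\beta$ on $C_T$ with $\ca L\otimes\ca O_{C_T}(\beta)\cong\pi^*\ca G$ étale-locally, i.e.\ $\ca L\cong\ca O_{C_T}(-\beta)\otimes\pi^*\ca G^\vee$. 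Writing $\alpha=-\beta$ and $\ca F=\ca G^\vee$ — with $\ca F$ descending to $T$ by the uniqueness of the pair $(\alpha,\ca F)$ recorded after \ref{eq:logDRL} — this is exactly the datum defining $\logDRL(\ca L)$ in \ref{eq:logDRL}, provided one checks that the reducedness convention built into the definition of $\tilde S$ corresponds to the reducedness condition of \ref{eq:logDRL} (this matching of PL-function conventions is precisely the purpose of the log alteration appearing in the definition of $\tilde S$; see also \cite[\S6]{Bae2020Pixtons-formula}). Since a strict closed subscheme of $\tilde S$ represents the subfunctor of $\tilde S$ it cuts out, this shows that $E\times_{\ca J}\tilde S$ with its strict log structure represents the same functor on $\cat{LogSch}_S$ as $\logDRL(\ca L)$, which is the first assertion; in particular $\logDRL(\ca L)\to\tilde S$ is strict, hence a closed immersion, justifying the discussion before \ref{introtheorem:logDR}.

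\emph{Matching the cycles.} Next I would use that $E$ is the zero section $e$ of $\Jac^{\ul 0}_{C/S}$, sitting inside $\ca J$ as an open subspace (\ref{def:comp_jac}, condition (2)), and that $E$ is closed in $\ca J$ because $\ca J\to S$ is proper. As $\Jac^{\ul 0}_{C/S}\to S$ is a semiabelian algebraic space, $e$ is a regular embedding of codimension $g$, so the refined Gysin homomorphism $E^!$ along $g\colon\tilde S\to\ca J$ is defined; by ``the pullback of $[E]$'' the statement means $i_*E^![\tilde S]$, where $i\colon\logDRL(\ca L)=E\times_{\ca J}\tilde S\hookrightarrow\tilde S$. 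Compatibility of the refined Gysin map with restriction to the open $\Jac^{\ul 0}_{C/S}\subseteq\ca J$ gives $E^![\tilde S]=e^!\big[\tilde S\times_{\ca J}\Jac^{\ul 0}_{C/S}\big]$, and by the same no-exceptional-curves argument $\tilde S\times_{\ca J}\Jac^{\ul 0}_{C/S}$ is the locus in $\tilde S$ where $f^*\ca L(\beta)$ has multidegree $\ul 0$, which under $\alpha=-\beta$ is the space $\tilde S^{\ca L}$ of \ref{eq:StildeL} (or a common log refinement of it). Since $\logDR(\ca L)$ is by definition the Gysin pullback of the unit section, equivalently $e^![\tilde S^{\ca L}]$, and this class is insensitive to replacing $\tilde S^{\ca L}$ by such a refinement — both have $\logDRL(\ca L)$ as their $e$-fibre, over which the refinement restricts to the identity, so pushforward along the refinement acts trivially — we obtain $E^![\tilde S]=\logDR(\ca L)$ in $\Chow^g(\logDRL(\ca L))$, and applying $i_*$ gives the second assertion.

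\emph{The main obstacle.} I expect the delicate part to be the first paragraph: pinning down that the scheme and log structures of $E\times_{\ca J}\tilde S$ agree with $\logDRL(\ca L)$ \emph{on the nose} rather than merely up to a modification, which requires carefully reconciling the sign $\alpha=-\beta$ and the reducedness conventions in the definitions of $\tilde S$ and of \ref{eq:logDRL}. Once that bookkeeping is in place, the reduction ``on the zero section the subdivision is trivial'' and the cycle computation are routine intersection theory on the relative Picard space.
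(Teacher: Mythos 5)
Your proof is correct and follows essentially the same route as the paper's (which is considerably terser): first identify the two functors on log schemes, using that admissibility forces the quasi-stable subdivision to be trivial over the zero section, and then reduce the cycle identity to the definition of $\logDR(\ca L)$ as the Gysin pullback of the unit section, via the identification of the preimage of $\Jac^{\ul 0}_{C/S}$ in $\tilde S$ with $\tilde S^{\ca L}$. The only slip is cosmetic: from $\ca L(\beta)\cong\pi^*\ca G$ one gets $\ca L\cong\ca O_{C_T}(-\beta)\otimes\pi^*\ca G$, so the base line bundle in your identification with \ref{eq:logDRL} should be $\ca G$ rather than $\ca G^\vee$.
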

\begin{proof}
The first assertion is easily verified, as they represent the same functors on log schemes. For the second assertion, note that $E$ is contained in $\Jac^{\ul 0}_{C/S} \subseteq \ca J$, and the preimage of $\Jac^{\ul 0}_{C/S}$ in $\tilde S$ is exactly the space $\tilde S^\ca L$ from \ref{eq:StildeL}. The virtual fundamental class of $\logDRL(\ca L)$ is defined as the pullback of the fundamental class of the zero section. 
\end{proof}

Combining \ref{lem:Er_push_E}, \ref{thm:logDR_chern}, \ref{lem:E_pullback_logDR}, and the fact that $rc_1(p^*\ca F^{1/r}) = c_1(p^*\ca F)$, we deduce that for every positive integer $r$ the equality
\begin{equation}\label{eq:logDR_formula}
\logDR(\ca L) \cdot  c_1(p^*\ca F)^u = r^{u+1} \epsilon_*c_{g+u}(-R\pi_*\ca F^{1/r})
\end{equation}
holds in the Chow ring of $\tilde S$. We have proven \ref{introtheorem:logDR}, where the line bundle $\tilde{\ca L}$ is defined as the pullback of $\ca F$. 


\section{Riemann--Roch calculations}

\label{sec:main_computation}

\subsection{The twisted curve and the desingularised curve.}

We write $\pi\colon \ca C \to \PIC^{1/r}$ for the universal $r$-twisted curve,  $\ca L^{1/r}$ for the universal $r$th root on $\ca C$, and $\mathfrak r\colon \ca C \to C$ for the relative coarse moduli space of $\ca C$ over $\PIC^{1/r}$. 

We write $\Sing$ for the singular locus of $\pi$ and 
$\SSing\to \Sing$ for the stack classifying the branches of the nodes; 
we write $\sigma\colon \SSing \to \SSing$ for 
the deck involution. 
The locus $\SSing$ decomposes as  \begin{equation}\label{eq:SingDecomp}
 i=\bigsqcup_{a=0}^{r-1}i^a\colon\bigsqcup_{a=0}^{r-1}
 \SSing_a=\SSing\longrightarrow \Sing\hookrightarrow \ca C,
 \end{equation}
 because $\ca L^{1/r}$ and the  bundle 
of tangent lines along the privileged branch in $\SSing$
are $\pmb\mu_r$-linearized: the first 
$\pmb\mu_r$-representation is an $a$th power of 
the second for a suitable locally constant index $a\in \{0,\dots, r-1\}$.
We set $j^a\coloneqq\pi\circ i^a$. 
The first Chern class of the cotangent line  at the chosen branch 
equals $\frac1r$ times the first Chern class $\psi$  of 
 the  line bundle cotangent to the coarse branch. 
We write  $\psi'\coloneqq \sigma^*\psi$ for 
the classes attached to the second branch.
Finally, the first Chern class of the universal line bundle $\ca L$ on $\ca C$
will be denoted by $\xi$ and the pullback to $\SSing_a$ will be denoted by $\xi_a$.

At each point $p$ of $\Sing$ consider the Cartier divisor $\Delta$ of $\PIC^{1/r}$ where 
the node persists. The local picture of $\ca C$ at $p$
can be written as the quotient stack 
$[(xy=t)/\pmb \mu_r]$ where $t$ the is the local parameter of $\Delta$ and 
$\zeta\in \pmb \mu_r$ operates as $(x,y)\mapsto(\zeta x,\zeta^{-1}y)$. 
The local picture of $C$ at $p$ can be written as 
$(xy=t^r)$ and, 
by blowing up iteratively, 
we get 
the desingularisation 
$\widetilde \pi\colon \widetilde C\to \PIC^{1/r}$.
We have 
$$\ca C\xrightarrow{ \ \ \mathfrak r\  \ } C\xleftarrow{\ \ \kappa\  \ }\widetilde C.$$
Blowing $\widetilde C$ down  to $C$ yields the contraction $\kappa$ of chains of $r-1$ projective lines on all nodes of $C$. After base change via $i\colon \SSing_a\to C$ 
the exceptional divisor $\on{Exc}(\kappa)$  can be written as a family of chains of $r-1$ projective lines $\cup_{i=1}^{r-1} E^a_i$ over $\SSing$ 
with the $\bb P^1$-bundles $E^a_i$ labelled from $1$ to $r-1$ starting from the base change of the projective line 
in $\widetilde C$ meeting the 
privileged branch corresponding to the base point in $\SSing$. 
Abusing notation, we  refer by $E^a_i$ to the image 
of $E^a_i$ in $\widetilde C$; in this way we have $E^a_i=E^b_{j}$ for $i+j, a+b\equiv 0\ (r)$. 
The fibre of $\widetilde C$ over $\Delta$ is the divisor $\widetilde C_\Delta=C_\Delta+ \on{Exc}(\kappa),$ %
where $C_\Delta$ is the strict transform of $C$ in 
$\widetilde C$. 
We also write $\tilde \imath\colon \SSing^\sim\to \Sing^\sim$
for the double cover of the singular locus $\Sing^\sim\in \widetilde C$ and $\widetilde\psi$
 for the first Chern classes
of the cotangent line bundles at the privileged 
branch. We write $\widetilde{\psi}'$ when we switch to 
the other branch. 


The line bundle $\ca L$ on $\ca C$ 
is the pullback of a line bundle  via $\mathfrak r$. We write $\widetilde L$
for the pullback of the same line bundle on $\widetilde C$. 
On $\widetilde C$, there is a line bundle $\widetilde L^{1/r}$
whose $r$th power is isomorphic to $\widetilde L(-D)$ for $D=\sum_{a=0}^{\lfloor r/2\rfloor } D_a$  with 
\begin{equation}\label{eq:chains}  
 D_a= \sum_{i=1}^{r-a}iaE^a_i+\sum_{i={r-a+1}}^{r-1}i(r-a)E^a_{r-i},
\end{equation}
for $a\neq 0, r/2$ and $D_a=\sum_{i=1}^{r-a}iaE^a_i$ otherwise,
and whose push-forward on $C$ satisfies 
$$\mathfrak r_*\ca L^{1/r}\cong \kappa_*\widetilde L^{1/r}$$
(this setup is recalled in 
\cite[Lem.~2.2.5]{Chiodo2008Towards-an-enum}, see also 
Fig.~3 therein picturing the divisor $D_a$). 
%
%
%
%

\subsection{Proof of \ref{introtheorem:ch_formula}}
We express $\on{ch}(R\pi_*\ca L^{1/r})$ as 
$\on{ch}(R\widetilde \pi_*\widetilde L^{1/r})$.  GRR yields\footnote{Here we apply GRR to a representable morphism from $\tilde C$ to the Artin stack $\PIC^{1/r}$, whereas the form of the theorem we use is only available for morphisms of schemes. This is justified because we work always in Chow cohomology with test objects being separated schemes of finite type; as such, the assertion that GRR holds for $\tilde C \to \PIC^{1/r}$ is formally identical to the assertion that GRR holds for the base-change $\tilde C\times_{\PIC^{1/r}}S \to S$ for $S$ a separated scheme of finite type, and the observation that both sides of the formula commute with base-change. The same remark applies later in the proof at \ref{eq:mumfordlocal} where we use GRR again to compute some terms of the right hand side.  }
$$\on{ch}\left(R\widetilde \pi_*\widetilde L^{1/r}\right) =    \widetilde\pi_*\left(\exp(c_1(\widetilde L)/r)\exp(-D/r)\on{td}\Omega_{\widetilde \pi}^\vee\right)$$
with $$\on{td}^\vee(L)\coloneqq\on{td}(L^\vee)=\frac{c_1(L)}{e^{c_1(L)}-1}=\sum_{n\ge 0}
\frac{B_n}{n!}c_1(L)^n.$$
We re-write as $\widetilde\pi_*\left(({{\pmb 1}+U}) ({\pmb 1}+E)\on{td}\Omega_{\tilde \pi}^\vee\right)$,
with $$\pmb 1 + U=\exp(c_1(\widetilde L)/r)=\kappa^* \left(\exp(c_1(\ca L)/r)\right)=\kappa^* \left(\exp(\xi/r)\right)$$ and $\pmb 1+E=\exp(-D/r)$.
The formation of $\Omega_{\tilde\pi}$ is compatible with
the base change 
\begin{equation}
 \begin{tikzcd}[row sep=small]
  \widetilde C \arrow{d}[swap]{\tilde \pi\,}  
  \arrow[r, "\varphi"] & C_{\mathfrak M}\arrow[d, "\,\mathfrak p"] \\
  S\arrow[r, "\mathfrak f"]& \mathfrak M
\end{tikzcd}
\end{equation}
from
the universal 
curve $C_{\mathfrak M}$ over the moduli stack  
of prestable curves 
$\mathfrak M$.
The exact sequence 
\begin{equation}\label{eq:Omegaomega} 
0\to \Omega_{\mathfrak p} \to \omega_{\mathfrak p}\to \omega_{\mathfrak p}|_{\Sing_{\mathfrak p}}=\ca O_{\Sing_{\mathfrak p}}\to 0
\end{equation}
yields
\begin{multline*}\label{eq:toddsplit}
\on{td}^\vee\Omega_{\tilde\pi}=
\on{td}^\vee(\varphi^*\Omega_\mathfrak p)=
\on{td}^\vee(\varphi^*\omega_\frak p - \varphi^*\ca O_{\Sing_\frak p})=
\on{td}^{\vee\!}\omega_{\widetilde \pi}(\varphi^*\on{td}^{\vee\!}{\ca O}_{\Sing^\sim})^{-1}.
\end{multline*}
The first factor is $$(1+K)=\on{td}^\vee(\omega_{\widetilde\pi})= \kappa^*
\on{td}^\vee(\omega_{\pi}),$$ 
and the second is of the form \begin{equation}\label{eq:mumfordlocal}
\pmb 1+N=(\varphi^*\on{td}^{\vee\!}{\ca O}_{\Sing^\sim})^{-1}
=\pmb 1 + \widetilde i_*P(c_1(N_{\tilde i}), c_2(N_{\tilde i}))
\end{equation}
for a universal power series $P$ in the 
Chern classes of the normal bundle of 
$\widetilde i$, see \cite[Lem.~5.1]{Mumford1983Towards-an-enum}.


We have reduced the computation to 
pushing forward via $\widetilde\pi$
the product of the four terms $({{\pmb 1}+U})
({{\pmb 1}+E})({{\pmb 1}+K})({{\pmb 1}+N})$, 
where $N$ is supported on the nodes, 
$E$ on the exceptional divisor, $K$ on the canonical 
divisor and ${\pmb 1}+U$ is the pullback of the universal expression 
${\pmb 1}+\ca U=\exp(c_1(\ca L)/r)$  from $C$. 
The products $KN$ vanishes, because $\omega_{\widetilde\pi}$
is trivial along the nodes. 
Therefore the above four-term product
reduces to 
\begin{equation}\label{eq:threeterms}
({{\pmb 1}+U})({\pmb 1}+K) + ({{\pmb 1}+U})\big(({\pmb 1}+E) ({\pmb 1}+N)-{\pmb 1}\big)+
({\pmb 1}+U)KE
\end{equation}
and we can ignore the last summand because
$$\widetilde{\pi}_*(({{\pmb 1}+U})KE)={\pi}_*\kappa_*(({{\pmb 1}+U})KE)=\pi_*((({{\pmb 1}+\ca U})\kappa_*(KE)))$$ 
and 
$\omega_{\widetilde \pi}=\kappa^*\omega_{ \pi}$ implies
$\kappa_*(KE)=0$.
In order to see this, we can expand $E=e^{D/r}-\pmb 1$ as a sum of $r-1$ terms of the form $E_k^aP_k$ with $P_k$ a polynomial in 
$E^a_{h}$, for $h=k, k-1$ or $k+1$. 
Then $E^a_k$ can be replaced within $P_k$ by $\widetilde C_\Delta$ minus the closure of its complement within $\widetilde C_\Delta$. 
Ultimately $KE$ boils down to two types of terms:
$(i)$ a sum of terms of classes pulled back from $C$ to $E^a_k$ and pushed down again (yielding 0), or 
$(ii)$ terms containing $KE^a_kF$ where $F$ is a 
divisor meeting $E_k^a$ at a node where $K$ is trivial. The identity $\kappa_*(KE)=0$ follows.

We now compute the push-forward of the two first terms of 
\ref{eq:threeterms}
by projection along $\kappa$. Since $\kappa_*\pmb 1= \pmb 1$, the first summand $({{\pmb 1}+U})({\pmb 1}+K)$ is
\begin{equation}\label{eq:bulk}\exp\left(\frac1r c_1{\ca L}\right)\frac{c_1(\omega_\pi)}{e^{c_1(\omega_\pi)} -1 }=\sum_{m\ge 0} \frac1{m!}{\ca B}_m\left(\frac{\xi}{r}, c_1\omega\right).\end{equation}
For the second term $({{\pmb 1}+U})\big(({\pmb 1}+E) ({\pmb 1}+N)-{\pmb 1}\big)$, since $\pmb 1 + U$ comes from 
$C$, we first need to determine $\kappa_*(({\pmb 1}+E) ({\pmb 1}+N)-{\pmb 1})$. 
We can  rely 
on \cite{Chiodo2008Towards-an-enum} 
where this push-forward vie $\kappa$ on $C$ (matching \cite[eq. (20)]{Chiodo2008Towards-an-enum}) is explicitly identified 
in the Chow ring  with
\begin{equation*}
\pmb 1 + \frac{r}2 \sum_{d\ge 1}\frac{B_{d+1}(\frac{a}r)}{(d+1)!}i^a_*\frac{\psi^{d}-(-\psi')^d}{\psi + \psi'},
\end{equation*}
(see \cite[eq. (21)]{Chiodo2008Towards-an-enum};
the computation is carried out in \cite[Step 3, p.1475]{Chiodo2008Towards-an-enum} and the same argument is used in 
\cite[eq.(10), \S2.4]{Janda2018Double-ramifica}).
By taking into account the product with  
$\pmb 1 + U$, we get 
\begin{equation}\label{eq:theformulawithja}
\frac{r}2 \sum_{a=0}^{r-1}j^{a}_* 
\sum_{\substack{p+q=m+1\\ p\ge 2}}\frac{(\xi_a)^q}{r^q} \frac{B_p(\frac{a}{r})}{p!q!} 
\frac{\psi^{p-1}-(-\psi')^{p-1}}{\psi+\psi'}.
\end{equation}
To obtain \ref{introtheorem:ch_formula}, we break $\SSing_a$ up into a union of irreducible components $\SSing_h$ for each half-edge, and re-write in the notation of tautological classes on $\PIC^{1/r}$ introduced in \ref{sec:intro_RR}, noting that a fibre of $\SSing = \bigsqcup_{a=0}^{r-1} \SSing_a$ is nonempty if and only if the coefficients $a$ form a weighing mod $r$ balancing the multidegree of $\ca L$.

\subsection{Proof of \ref{preintrotheorem:Pixton}}

\ref{preintrotheorem:Pixton} can be deduced from \ref{introtheorem:ch_formula} in exactly the same way as \cite[Proposition 5]{Janda2016Double-ramifica} is deduced from \cite[Theorem 1.1.1]{Chiodo2008Towards-an-enum}. 

We write out some details of the argument, because the tautological classes on $\PIC^{1/r}$ are less standard than those on spaces of roots over $\Mbar_{g, n}$, and because an intermediate computation will be important in the proof of \ref{introtheorem:DR}.

Given positive integers $m$ and $r$, let
\begin{equation}
a_m \coloneqq (-1)^m \hat \pi_* \frac{\mathcal B_{m+1}\left(\frac{c_1\ca L}{r}, c_1\omega\right)}{m(m+1)}, 
\end{equation}
and given also a  codimension-1 decorated graph $\tilde \Gamma = (\Gamma, \delta, w) \in \rgraphs^{1/r}(1)$ corresponding to a stratum $\PIC^{1/r}_{\tilde \Gamma} \to \PIC^{1/r}$, define
\begin{equation}\label{eq:b_def}
b_{{\tilde \Gamma},m} = \sum_{\substack{{p+q=m+1}\\{p\ge 2}}}(-1)^{m-1}(m-1)!
 \frac{B_{p}(\frac{w(h)}{r})}{p!q!\abs{\on{Aut}(\tilde\Gamma)}}\xi_e^{q} (\psi_h^{p-1} - (-\psi_{h'})^{p-1})
\end{equation}
in $\Chow(\PIC^{1/r}_{\tilde\Gamma})$, where the unique edge $e$ of $\Gamma$ has half-edges $h$ and $h'$, and $N_{\tilde \Gamma} = -({\psi_h + \psi_{h'}})/{r}$ is the first Chern class of the normal bundle to the codimension-1 stratum $\PIC^{1/r}_{\tilde\Gamma}$. 


\begin{lemma}\label{lem:total_chern_before_push}
The total Chern class  $c(-R\pi_*\ca L^{1/r})$ equals 
\begin{equation*}
   \left(\exp{\sum_{m \ge 1} a_m}\right)\sum_{\tilde \Gamma \in \rgraphs^{1/r}} \frac{r^{\abs{E(\tilde \Gamma)}}}{\abs{\on{Aut}(\tilde  \Gamma)}} {j_{\tilde \Gamma}}_* \left[ \prod_{{e} \in E(\tilde \Gamma)} \frac{1-\exp{\sum_{m \ge 1} b_{{e},m}}}{-rN_{e}} \right],
\end{equation*}
where $j_{\tilde \Gamma}$ is the finite map from $\PIC^{1/r}_{\tilde\Gamma}$ to $\PIC^{1/r}$. The product is taken over edges of $\tilde \Gamma$, and $b_{e,m}$ denotes the class from \ref{eq:b_def} applied to the graph in $\rgraphs^{1/r}(1)$ obtained by contracting all edges of $\tilde \Gamma$ except for $e$. 
\end{lemma}
\begin{proof}The proof is based on that of \cite[Proposition 4]{Janda2016Double-ramifica}. We re-write \ref{introtheorem:ch_formula} as
\begin{equation}
(-1)^m(m-1)!  \on{ch}_{m}(R\pi_*\ca L^{1/r}) = a_m + \sum_{\tilde \Gamma \in \rgraphs^{1/r}(1)}  (j_{\tilde \Gamma})_* \left(\frac{b_{\tilde \Gamma, m}}{N_{\tilde \Gamma}}\right)
\end{equation}
and apply the standard formula
\begin{equation}\label{eq:chern_char_class}
c(-E^\bullet) = \exp\left( \sum_{m \ge 1} (-1)^m (m-1)!\on{ch}_m(E^\bullet)\right), \end{equation}
along with the observation that, for $\tilde \Gamma \in \rgraphs^{1/r}(1)$ and classes $Z_1, \dots, Z_n\in \Chow(\PIC^{1/r}_{\tilde \Gamma})$, we have 
\begin{equation}
\prod_{i=1}^n (j_{\tilde \Gamma})_*(Z_i) = \abs{\on{Aut}(\tilde \Gamma)}^{n-1}(j_{\tilde \Gamma})_*\left(N_{\tilde \Gamma}^{n-1}\prod_{i=1}^n Z_i\right)
\end{equation}
whenever $n \ge 1$. 
\end{proof}

Recall that $\epsilon\colon \PIC^{1/r} \to \PIC$ is the $r$th power map, finite flat of degree $r^{2g-1}$. 
\begin{lemma}\label{lem:pushforward_along_epsilon}
The direct image $\epsilon_* c(-R\pi_*\ca L^{1/r})$
equals
\begin{equation}
 \left( \exp{\sum_{m \ge 1} a_m}\right)\sum_{\tilde \Gamma \in \rgraphs}  \frac{r^{2g -1 - h^1(\Gamma)}}{\abs{\on{Aut}( \tilde \Gamma)}}  (j_{\tilde \Gamma})_* \left[ \prod_{e\in E(\tilde \Gamma)} \frac{1-\exp{\sum_{m \ge 1} b_{m,e}}}{N_e} \right].
\end{equation}
\end{lemma}
\begin{proof}
    All the terms of \ref{lem:total_chern_before_push} come by pullback from the case $r=1$ (indeed, the notation looks almost the same, because to avoid clutter we do not decorate the classes with pullbacks everywhere). The only thing to check is the leading power of $r$. Here we argue exactly as in \cite[Corollary 4]{Janda2016Double-ramifica}. 
\end{proof}

We define a class
\begin{multline}\label{eq:Pixton_strata_version}
P(r) \coloneqq \left(\exp{  -\frac{1}{2}\hat \pi_* (c_1\ca L)^2} \right)\cdot\\
 \sum_{\tilde \Gamma \in \rgraphs(1)}  \frac{r^{- h^1(\Gamma)}}{\abs{\on{Aut}(\tilde \Gamma)}} \sum_{w\in W_r(\tilde \Gamma)} {j_{\tilde\Gamma}}_* \left[ \prod_{e} \frac{1-\exp{(\frac{w(e)^2}{2} N_e})}{N_e} \right]
\end{multline}
in the Chow ring of $\PIC$, and we define $P_d(r)$ to be the codimension-$d$ part. 

\begin{proposition}[{\Cref{preintrotheorem:Pixton}}]\label{lem:c_d_pixton_equal} We restrict now to the total-degree-zero part of the Picard stack. 
Fix a positive integer $d$. Then the constant terms of the polynomials in $r$ given by 
    \begin{equation}
r^{2d+1-2g}    \epsilon_* c_d(-R\pi_*\ca L^{1/r})  \text{ and } P_d(r)
    \end{equation}
are equal in the Chow ring of $\PIC^{\mathrm{tot}0}$.
\end{proposition}
\begin{proof}
The fact that these are polynomial in $r$ uses \cite[Proposition $3''$]{Janda2016Double-ramifica} to show that the positive contribution to $r$ from summing over the weighting cancels with the negative contribution $r^{-h_1(\Gamma)}$. 
To equate the constant terms, the key observation is that any terms in the expression from \ref{lem:pushforward_along_epsilon} corresponding to $m>1$ can be discarded as they cannot contribute to the constant term (this is the same argument as in \cite[Corollary 5]{Janda2016Double-ramifica}). The formula is then immediate from the equality $\ca B_2(x,y) = x^2 - xy +y^2/6$. 
\end{proof}

\subsection{The class over  moduli  of roots on $r$-stable curves}\label{sec:formula_on_Mbar}

In this section we see how \ref{introtheorem:ch_formula}  (in the shape of \ref{ch_formula_withBH}) specialises  over the moduli stack of $r$th roots of 
a 
line bundle
$\ca L$ on the universal stable curve over $\Mbar_{g,n}$. For $r=1$ the formula recovers the equation 
given by Pagani, Ricolfi, and van Zelm \cite{Pagani2020Pullbacks-of-un}, see \ref{rem:compatibility}.


\begin{notation}[the divisors $S_i$]
Within the universal stable curve over $\Mbar_{g,n}$, for $i=1,\dots, n$, the divisors $S_i$ are the images of the sections specifying the 
$i$th marking. 
\end{notation}

\begin{notation}[large pairs,  complementary pairs, and the relations $\preceq$ and $\prec$] \label{notn:small}
We write $\delta_{g,n}$ for the set of pairs 
$(h,N)$ with $h\in\{0,\dots, g\}$ and $N\subseteq \{1, \dots, n\}$. The \emph{pair} $(h,N)$ is \emph{large} if, for $n>0$, $N$ contains the index $1$, and,  
for $n=0$, we have $h>g/2$. We write $\delta^+_{g,n}$ for the set of large pairs. 
The \emph{complementary} pair to $(h,N)$
is given by $-(h,N)\coloneqq(h'=g-h, N'=\{1, \dots, n\}\setminus N)$.
Finally, we write $(h,N)\preceq(k,M)$ if the two conditions $h\le k$
and $N\subseteq M$ are satisfied.
We write  $(h,N)\prec (k,M)$ if 
at least one of the two conditions  is strict (\emph{i.e.} if 
$(h,N)\prec (k,M)$ and $(h,N)\neq (k,M)$).
Notice there is at most one large pair among the complementary pairs 
$(h,N)$ and $-(h,N)$, and the only case where 
neither of them is large is  $(h,N)=(g/2,\varnothing)$ with $n=0$.
\end{notation}

\begin{notation}[the divisors $D_h^N$]\label{notn:vertdivisors}
Let $(h, N)\in \delta_{g,n}$. Identifying the universal curve $\pi\colon \ca C\to \Mbar_{g,n}$ with $\Mbar_{g, n+1}$, we define $D_h^N$ to be the boundary divisor in $\ca C$ corresponding to the one-edge genus-$g$, $n+1$-marked graph with one vertex decorated 
by $(h,N)$ (automatically the remaining vertex is decorated by $h'$ and $N'\sqcup \{n+1\}$, where $(h', N')$ is the complementary pair). 
We refer to $D_h^N \cap D_{h'}^{N'}$ (the node where 
$D_{h}^N$ meets the rest of the fibre) as a separating
node of type $(h,N)$. 
\end{notation}
For the remainder of this section $r$ denotes a positive integer. We fix a line bundle $\ca L$ on the universal curve over $\Mbar_{g,n}$. 
There exist unique integers $a_1, \dots, a_n, s$ and $l_h^N$ for any large pair $(h,N)$ such that 
\begin{equation}\label{eq:formL}
  \ca L
\cong (\omega_{\on{log},\pi})^{\otimes s}\Big(-\sum_{i=1}^n a_iS_i +\sum_{(h,N)\in \delta^+_{g,n}} l_{h}^N D_{h}^N\Big)
\end{equation}
up to pullback from the moduli space
$\Mbar_{g,n}$. This is 
a version of the Franchetta conjecture proven in \cite{harer1983second}, 
\cite{arbarello1987picard}, and
\cite{mestrano1987conjecture}.
We assume that the total degree of $\ca L$ is divisible by $r$; in other words, the coefficients
$a_i$ and $s$ satisfy that $(2g-2+n)s -\sum_i a_i\in r\mathbb Z$.
We also notice that $l_{N}^h$ are only determined when $(h,N)$ is large; 
we set them to $0$ otherwise.

\begin{notation}[the singular locus]\label{notn:singlocusdecomptype}
The nodes are either separating (s) 
or nonseparating (ns); we have 
$\SSing= \SSing_{\on{ns}} \sqcup \SSing_{\on{s}}$.
The stack $\SSing_{\on{s}}$ can be decomposed according
to the type (see \ref{notn:vertdivisors}) as a union of 
open and closed substacks $\SSing_{k,M}$ for $(k,M)\in \delta_{g,n}$
corresponding to the branch over the node of type $(k,M)$ lying on $D_{k'}^{M'}$ for 
$(k', M')=-(k,M)$. The term $\SSing_{\on{ns}}$ can be decomposed as 
$\bigsqcup_{a=0}^{r-1}\SSing_{\on{ns}}^a$ according to the multiplicity of $\ca L$.
On the other hand, for each type $(k,M)$ the multiplicity 
of $\ca L$ on the branch belonging to $D_{k'}^{M'}$
equals 
 $a_{k,M}\in \{0,\dots, r-1\}$ defined by
$$a_{k,M}\equiv (2k-1)s -  \sum_{M} (a_i-s) +\begin{cases} l_{k}^{M} & \text{if $(k,M)$ is large},\\
-l_{k'}^{M'} & \text{if $(k',M')$ is large},\\
0 & \text{else}\end{cases}$$ 
 modulo $r$.
We have 
\begin{equation}\label{eq:SingDecomp}
\SSing= \bigsqcup_{a=0}^{r-1}\SSing_{\on{ns}}^a \sqcup \bigsqcup_{(k,M)\in \delta_{g,n}} \SSing_{k,M}
\end{equation}
alongside with maps $\bigsqcup_{a=0}^{r-1}i_{a,\on{ns}}\sqcup \bigsqcup
i_{k,M}$ to the universal curve $\ca C$ and 
$\bigsqcup_{a=0}^{r-1}j_{a,\on{ns}}\sqcup \bigsqcup
j_{k,M}$ to the moduli stack $\Mbar_{g,n}^{\ca L/r}$. 
\end{notation}
\begin{remark}[intersecting the singular locus and the vertical 
divisors $D_h^N$]
The vertical divisor $D_h^N$ meets the image of $i_{k,M}$ if and only if 
$(h,N)\preceq(k,M)$ or $(h,N)\preceq(k',M')$ which we can write concisely as $(h,N)\preceq\pm(k,M)$ using \ref{notn:small}. 
If $(h,N)$ is large, a vertical divisor $D_{h,N}$ meets a singular locus $\on{im}(i_{k,M})$
only if $(k,M)$ is large 
or $-(k,M)$ is large and the two conditions $(h,N)\preceq(k,M)$ and $(h,N)\preceq-(k,M)$ 
 cannot
be satisfied 
simultaneously. For the same reasons two  vertical divisors $D_{h_1,N_1}$ 
meet each other $D_{h_2,N_2}$ if and only if $(h_1,N_1)\preceq (h_2,N_2)$ or 
$(h_1,N_1)\succeq (h_2,N_2)$ and the relation is strict if they are distinct. 
 \end{remark}

If the  
relation is strict, \emph{i.e.} $(h,N)\prec (k,M)$,
the divisor $(i_{k,M})^*D_{h}^N$ 
is the pullback of $D_h^N$ from 
$\Mbar_{g,n}$ with a factor $r$ due to the ramification of
the stack of $r$-stable curves over the stack of stable curves $\Mbar_{g,n}$. Instead, if $(h,N)=(k,M)$ we get 
the first Chern class of the tangent line $-\psi/r$, 
 multiplied again by $r$. We get the following lemma, 
 where we abused  notation and wrote $D_{h}^N$ for 
the pullback of $D_h^N$ from $\Mbar_{g,n}$ to $\SSing_{k,M}$.
We write 
$l_k^M\psi + l_{k'}^{M'}\psi'$ which equals the first
or the second summand
depending on whether we have 
$(h,N)\prec(k,M)$ or $(h,N)\prec-(k, M)=(k',M')$ ($l_k^M$ is zero if $(k,M)$ is not large). 
The statement also contains the computation of 
$(i^{a}_{\on{ns}})^*c_1\ca L$, which follows by the same 
argument as above. 
\begin{lemma}\label{lem:seprest}
For any pair $(k,M)\in \delta_{g,n}$, on $\SSing_{k,M}$ we have 
\begin{equation}\label{eq:seprest}(i_{k,M})^*c_1\ca L= -{l_{k}^M}\psi-l_{k'}^{M'}\psi'+ r\sum\limits_{(h,N)\prec\pm(k,M)}l_{h}^N D_{h}^{N}.
\end{equation}
Furthermore, for any $a\in \{0,\dots,r-1\}$, we have 
\begin{equation}\label{eq:nsrest}
(i^{a}_{\on{ns}})^*c_1\ca L=r\sum_{(h,N)\in \delta_{g,n}} l^N_{h} 
D^{N}_{h}. 
\end{equation}\qed
 \end{lemma}

The boundary contributions from the second summand of \ref{introtheorem:ch_formula}
can now be written\footnote{Note that, as in \ref{eq:theformulawithja} and unlike \ref{introtheorem:ch_formula}, we consider the pushforward via $j^{a}$ instead of  $j_{\tilde \Gamma}$. This 
requires a factor $1/2$ which we omit because, 
with respect to \ref{eq:theformulawithja},
we replace the $\sigma$-involution invariant 
expression $\frac12B_p(a/r)(\psi^{p-1}-(-\psi')^{p-1})/(\psi+\psi')$ (for $p\ge 2$)
by the first summand $B_p(a/r)\psi^{p-1}/(\psi+\psi')$ which is not $\sigma$-invariant 
but matches \ref{eq:theformulawithja} globally after summing over all pushforwards $\sum_a j^{a}_*(\dots)$ as a consequence of  
$B_p(a/r)=(-1)^pB_p(1-a/r)$ for $a\neq 0$ 
and $B_p(0)=(-1)^pB_p(0)$ for $p\ge 2$. This observation 
allows us to conveniently rewrite the formula in terms of homogenized Bernoulli polynomials.}  as 
\begin{multline}\label{ch_formula_withBH} 
\on{ch}_{m}(R\pi_*\ca L^{1/r})=\pi_* \frac{\mathcal B_{m+1}\left(\frac{\xi}{r}, c_1\omega\right)}{(m+1)!}+\\
{r} \sum_{0\le a<r} j^{a}_*
\frac{[\mathcal B_{m+1}(\frac1r({a}\psi_h+\xi_a), \psi_h )]_{\deg_{\psi_h}\ge 1}}{(m+1)!\psi_h(\psi_{h}+\psi_{h'})},
\end{multline}
where the term $[\mathcal B_{m+1}(\frac1r({a}\psi+\xi_a),\psi)]_{\deg_\psi>1}$
is the homogenized  polynomial without\footnote{Actually, the terms in degree $1$ in the variable $\psi$ corresponding
to $a>0$ cancel out with the terms in degree $1$ 
for $a'=r-a$ in the class $\psi$. Therefore, the numerator of the second 
summand can be written as
$
\mathcal B_m\left(\frac{a}{r}\psi+\xi_e,\psi\right)-\left(\xi_e\right)^m+\frac{\delta_{a,0}}2\psi \left({\xi_e}\right)^{m-1}.$} all terms of degree $0$ and $1$ in the variable $\psi$.

By \ref{lem:seprest} we  
express $\xi_a$ in terms of the divisors $D_{h,N}$ and the psi-classes, which we handle via   
$\mathcal B_m(x+y,z)=
\mathcal B_{m}(y,z)+\sum_{j=1}^m \binom{m}{j} x^j
\mathcal B_{m-j}(y,z)$. In 
this way the usual 
truncation $[\cdots]:=[\cdots]_{\deg_\psi>1}$ is compatible with the extra psi-classes and we get the
following expression for the second summand of $\on{ch}_m(R\pi_*\ca L)$ in
\ref{ch_formula_withBH}.
\begin{multline*}
{r} \sum_{0\le a<r} j^a_{\on{ns}, *}
\frac{[\mathcal B_{m+1}(\frac{a}r\psi+ \sum_{h,N} l_h^ND_h^N,\psi)]}{(m+1)!\psi(\psi+\psi')}+\\
{r}\sum_{(k,M)\in \delta_{g,n}} j^{k,M}_{*}\Bigg(
\frac{[\mathcal B_{m+1}(\frac{a_{k}^M}r\psi+ \sum_{(h,N)\prec\pm(k,M)} l_h^ND_h^N,\psi)]}{(m+1)!\psi(\psi+\psi')}+
 \\
 \sum_{\stackrel{u+v=m+1}{s>0}}\left(-\frac{l_k^M\psi+l_{k'}^{M'}\psi'}{u!r}\right)^{\! u}
 \frac{[\mathcal B_{v}(\frac{a_{k}^M}r\psi+ \sum_{(h,N)\prec\pm(k,M)} l_h^ND_h^N,\psi)]}{v!\psi(\psi+\psi')}\Bigg).
\end{multline*}

    Since two distinct vertical divisors  $D_{h_1}^{N_1}$ and $D_{h_2}^{N_2}$ 
intersect each other if and only if $(h_1,N_2)\prec(h_2,N_2)$ 
or $(h_2,N_2)\prec(h_1,N_1)$, we can further simplify
 the
above expression
by 
taking the sum over the set of strictly \emph{increasing}  sequences of  large pairs
nested as follows
$$(\pmb h, \pmb N)_c
=(h_1,N_1)\prec\dots\prec (h_c,N_c),$$
for $c\ge 0$. For each of these 
configurations we truncate further 
the Bernoulli polynomials so that the 
intersections are supported on the strata
of the boundary of $\Mbar{}_{g,n}^{{\ca L}/r}$ underlying $\bigcap_i D_{h_i}^{N_i}$ for  $i=1,\dots,c$.

We write 
$\Sing^{(\pmb h, \pmb N)_c}$ 
or simply $\Sing^{\pmb h, \pmb N}$ for 
the singular locus (of any type, $\on{ns}$ or $(k,M)$) 
occurring within a curve which possesses a 
sequence of distinct separating 
nodes $n_1,\dots, n_c$ each one 
 yielding after normalisation a disconnected curve   
 with two components, one containing the node $n$
and the other one  having genus $h_i$ and  bearing the marking 
 set $N_i$ (if $c=0$ this is simply the definition of the singular locus $\Sing$ of $\ca C$).
As in \ref{eq:SingDecomp}, $\SSing^{(\pmb h, \pmb N)_c}$ denotes
the double cover classifying the two branches of $n$;
it coincides with $\SSing$ when $(\pmb h, \pmb N)$ 
has no entries (\emph{i.e.} $c=0$) and --- in complete analogy
with $\SSing$ --- it is equipped with the psi-classes
$\psi$ and $\psi'$
and it splits as 
$$\SSing^{\pmb h, \pmb N}= 
\bigsqcup_{a=0}^{r-1}\SSing_{a,\on{ns}}^{\pmb h, \pmb N}\sqcup 
\bigsqcup_{(k,M)\in \delta_{g,n}}
\SSing_{k,M}^{\pmb h, \pmb N}$$
mapping  
 via 
 $\bigsqcup_{a=0}^{r-1}i^{\pmb h, \pmb N}_{a,\on{ns}}\sqcup \bigsqcup
i_{k,M}^{\pmb h, \pmb N}$ to the universal curve $\ca C$ and 
$\bigsqcup_{a=0}^{r-1}j^{\pmb h, \pmb N}_{a,\on{ns}}\sqcup \bigsqcup
j_{k,M}^{\pmb h, \pmb N}$ to the moduli stack $\Mbar{}_{g,n}^{\ca L/r}$.
Furthermore
it maps to the divisor $D_{h_i}^{N_i}$ for all $i$ and the first Chern class of the 
pullback of ${\ca O}(D_{h_i}^{N_i})$ on it 
is the normal class $\nu_i$ which 
can be expressed as the opposite of 
the sum of the psi-classes associated to the two branches of the nodes $n_i$ of type $(h_i,N_i)$. 
\begin{notation}
[the truncated Bernoulli polynomial] 
Below we systematically evaluate the homogenized Bernoulli polynomials
$\mathcal B_n(x,y)$ 
on the variables $x=a\psi+ \sum_{i} b_i\nu_i$ and $y=\psi$ for some rational coefficients 
$a$ and $b_i$. 
Let  ${\mathcal{TB}}_n(a\psi+ \sum_{i} b_i\nu_i, \psi)$ 
the homogenized Bernoulli polynomial $\mathcal B_n$ 
truncated to its terms of positive degrees in each of the variables $\nu_i$ 
and to its terms of degree higher than $1$ in the variable $\psi$. 
 \end{notation}Then, we can rewrite \ref{introtheorem:ch_formula} 
over $\Mbar^{\ca L/r}_{g,n}$ in 
the following way.
\begin{corollary}
Over $\Mbar^{\ca L/r}_{g,n}$, we have  
\begin{multline*}
\on{ch}_{m}(\pi_!\ca L^{1/r})=\pi_* \frac{\mathcal B_{m+1}\left(\frac{c_1\ca L}{r}, c_1\omega\right)}{(m+1)!}\\+
{r}\sum_{\stackrel{(\pmb h, \pmb N)_c}{c\ge0}}\Bigg(\sum_{a=0}^{r-1}
(j_{a,\on{ns}}^{\pmb h, \pmb N})_*
\frac{{\mathcal{TB}}_{m+1}(\frac{a}r\psi+ \sum_{i=1}^c  l_{h_i}^{N_i}\nu_i,\psi)}
 {(m+1)!\psi(\psi+\psi')\prod_{i=1}^c \nu_i}
\\+
\sum_{(\pmb h, \pmb N)_c\prec \pm(k,M)} (j_{k,M}^{\pmb h, \pmb N})_*
 \frac{{\mathcal{TB}}_{m+1}(\frac{a_k^M}r\psi+ \sum_{i=1}^c  l_{h_i}^{N_i}\nu_i,\psi)}
 {(m+1)!\psi(\psi+\psi')\prod_{i=1}^c \nu_i}
 \\+ (j_{k,M}^{\pmb h, \pmb N})_*
 \sum_{\stackrel{s+t=m+1}{u>0}} \left(-\frac{l_k^M\psi+l_{k'}^{M'}\psi'}{u!r}\right)^{\!\!\!u}
  \frac{{\mathcal{TB}}_v(\frac{a_k^M}r\psi+ \sum_{i=1}^{c}  l_{h_i}^{N_i}\nu_i,\psi)}
  {v!\psi(\psi+\psi')\prod_{i=1}^c \nu_i}\Bigg).
\end{multline*}\qed
\end{corollary}

\begin{remark}\label{rem:compatibility} The three boundary terms  within brackets
parametrised by the strictly increasing 
sequences $(\pmb h,\pmb N)_c$ correspond to the 
terms 
$\widetilde{\pmb Y}$,  
$\widetilde{\pmb X}$ 
and $\pmb X$
in \cite[Theorem 1.1]{Pagani2020Pullbacks-of-un}
after summation over the parameters 
$a, b$ and $e$ appearing there.
\end{remark}

\bibliographystyle{alpha} 
\bibliography{prebib.bib}

\end{document}